\theoremstyle{plain}
\newtheorem{Thm}{Theorem}[section]
\newtheorem{Lem}[Thm]{Lemma}
\newtheorem{Cor}[Thm]{Corollary}
\theoremstyle{definition}
\newtheorem{Rem}[Thm]{Remark}
\newtheorem{Def}[Thm]{Definition}
\numberwithin{equation}{section}
\newcommand{\bnum}{\begin{enumerate}}
\newcommand{\enum}{\end{enumerate}}
\begin{document}
\title{Nil Clean Divisor Graph}
\author[A. Sharma and D. K. Basnet]{ Ajay Sharma and Dhiren Kumar Basnet*}
\address{\noindent A. Sharma, D. K. Basnet \newline Department of Mathematical Sciences,\newline Tezpur University,\newline  Napaam-784028, Sonitpur,\newline Assam, India.\newline Phone No: +916900740324}
\email{ajay123@tezu.ernet.in and dbasnet@tezu.ernet.in}

\subjclass[2010]{16N40, 16U99.}
\keywords {nil clean ring.\\
\,\,* \emph{Corresponding Author}}

%


%
%
\begin{abstract}
 In this article, we introduce a new graph theoretic structure associated with a finite commutative ring, called nil clean divisor graph. For a ring $R$, nil clean divisor graph is denoted by $G_N(R)$, where the vertex set is $\{x\in R\,:\, x\neq 0, \,\exists\, y(\neq 0, \neq x)\in R$ such that $xy$ is nil clean$\}$, two vertices $x$ and $y$ are adjacent if $xy$ is a nil clean element. We prove some interesting results of nil clean divisor graph of a ring.
\end{abstract}

\maketitle

%
%

\section{Introduction} \label{S:intro}
In this article, rings are finite commutative rings with non zero identity. Diesl \cite{diesl}, introduced the concept of nil clean ring as a subclass of clean ring in $2013$. He defined that an element $x$ of a ring $R$ to be a nil clean element if it can be written as a sum of an idempotent element and a nilpotent element of $R$. $R$ is called nil clean ring if every element of $R$ is nil clean. Also in $2015$, Kosan and Zhou \cite{TK}, developed the concept of weakly nil clean ring as a generalization of nil clean ring. An element $x$ of a ring $R$ is weakly nil clean if $x=n+e$ or $x=n-e$, where $n$ is a nilpotent element and $e$ is an idempotent element of $R$. The set of nilpotent elements, set of unit elements, nil clean elements and weakly nil clean elements of a ring $R$ are denoted by $Nil(R)$, $U(R)$, $NC(R)$ and $WNC(R)$ respectively. By graph, we consider simple undirected graph. For a graph $G$, the set of edges and the set of vertices are denoted by $E(G)$ and $V(G)$ respectively. The concept of zero-divisor graph of a commutative ring was introduced by Beck in \cite{Beck} to discuss the coloring of rings. In $1999$, Anderson and Livingston \cite{al}, introduced zero divisor graph $\Gamma (R)$ of a commutative ring $R$. They defined, the vertex set of $\Gamma (R)$ to be the set of all non-zero zero divisors of $R$ and two vertices $x$ and $y$ are adjacent if $xy=0$. Li et al.\cite{LI}, developed a kind of graph structure of a ring $R$, called nilpotent divisor graph of $R$, whose vertex set is $\{x\in R\,:\,x\neq 0, \, \exists \, y(\neq 0)\in R$ such that $xy\in Nil(R)\}$ and two vertices $x$ and $y$ are adjacent if $xy\in Nil(R)$. In $2018$, Kimball and LaGrange \cite{kimball}, generalized the concept of zero divisor graph to idempotent divisor graph. For any idempotent $e\in R$, they defined the idempotent divisor graph $\Gamma_e(R)$ associated with $e$, where $V(\Gamma_e(R))=\{a\in R\,\,:\,\,$ there exists $b\in R$ with $ab=e\}$ and two vertices $a$ and $b$ are adjacent if $ab=e$.


In this article, we introduce nil clean divisor graph $G_N(R)$ associated with a finite commutative ring $R$. We define the nil clean divisor graph $G_N(R)$ of a ring $R$ by taking $V(G_N(R))=\{x\in R\,:\,x\neq 0,\,\exists\, y(\neq 0, \neq x)\in R$ such that $xy\in NC(R)\}$  as the vertex set and two vertices $x$ and $y$ are adjacent if and only if $xy$ is a nil clean element of $R$. Clearly nil clean divisor graph is a generalization of both idempotent divisor graph and nilpotent divisor graph. The properties like girth, clique number, diameter and dominating number etc. of $G_N(R)$ have been studied.\par

 To start with, we recall some preliminaries about graph theory. For a graph $G$, the degree of a vertex $v\in G$ is the number of edges incident to $v$, denoted by $deg(v)$. The neighbourhood of a vertex $v\in G$ is the set of all vertices incident to $v$, denoted by $A_v$. A graph $G$ is said to be connected, if for any two distinct vertices of $G$, there is a path in $G$ connecting them. Number of edges on the shortest path between vertices $x$ and $y$ is called the distance between $x$ and $y$ and is denoted by $d(x,y)$. If there is no path between $x$ and $y$, then we say $d(x,y)= \infty$. The diameter of a graph $G$, denoted by $diam(G)$, is the maximum of distances of each pair of distinct vertices in $G$. If $G$ is not connected, then we say  $diam(G)=\infty$. Also girth of $G$ is the length of the shortest cycle in $G$, denoted by $gr(G)$ and if there is no cycle in $G$, then we say $gr(G)=\infty$. A complete graph is a simple undirected graph in which every pair of distinct vertices is connected by an edge. 
A clique is a subset a of set of vertices of a graph such that its induced subgraph is complete. A clique having $n$ number of vertices is called an $n$-clique. The maximal clique of a graph is a clique such that there is no clique with more vertices. The clique number of a graph $G$ is denoted by $\omega (G)$ and defined as the number of vertices in a maximal clique of $G$.  
\section{Nil clean divisor graph}
We introduce nil clean divisor graph as follows:
\begin{Def}
  For a ring $R$, nil clean divisor graph, denoted by $G_N(R)$ is defined as a graph with vertex set $\{x\in R\,:\, x\neq 0,\,\exists\, y(\neq 0, \neq x)\in R$ such that $xy \in NC(R)\}$ and two vertices $x$ and $y$ are adjacent if $xy\in NC(R)$.
\end{Def}
From the above definition, we observe that nil clean divisor graph is a generalization of nilpotent divisor graph, which is again a generalization of zero divisor graph. For any idempotent $e\in R$, nil clean divisor graph of $R$ is also a generalization of $\Gamma_e(R)$. As an example, the nil clean divisor graph $G_N(\mathbb{Z}_6)$ is shown below:

\begin{figure}[H]  
\begin{pspicture}(0,3)(0,-1)
\scalebox{.8}{
\rput(5,0){
\psdot[linewidth=.05](-6,3)
\psdot[linewidth=.05](-6,0)
\psdot[linewidth=.05](-9,1.5)
\psdot[linewidth=.05](-3,2.5)
\psdot[linewidth=.05](-3,0.5)
\psline(-9,1.5)(-6,3)(-6,0)(-9,1.5)(-6,0)(-9,1.5)(-3,2.5)(-3,0.5)(-9,1.5)
\rput(-9.3,1.5){$3$}
\rput(-6,3.3){$4$}
\rput(-6,-0.3){$1$}
\rput(-3,2.8){$2$}
\rput(-3,0.2){$5$}
}}
\end{pspicture}
\caption{Nil clean divisor graph of $\mathbb{Z}_6$.}\label{1}
\end{figure}
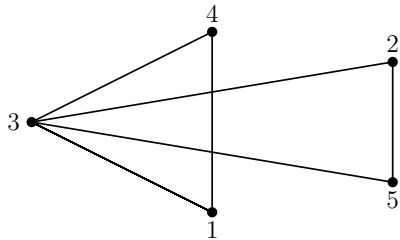

\begin{Thm}
  The nil clean divisor graph $G_N(R)$ is complete if and only if $R$ is a nil clean ring.
\end{Thm}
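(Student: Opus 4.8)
The statement is an "if and only if," so I would handle the two directions separately, and the easy direction first.

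\medskip

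\emph{The ``if'' direction.} Suppose $R$ is a nil clean ring. Then by definition every element of $R$ is nil clean, so in particular $xy \in NC(R)$ for every pair of elements $x, y \in R$. I need to check two things: first, that the vertex set $V(G_N(R))$ is all of $R \setminus \{0\}$, and second, that every pair of distinct vertices is adjacent. For the vertex set: given any nonzero $x \in R$, I must exhibit some $y \neq 0$, $y \neq x$, with $xy \in NC(R)$. If $|R| \geq 3$ this is immediate since any such $y$ works (as $R$ is nil clean, the product is automatically nil clean). The only delicate case is small rings; since $R$ is finite with nonzero identity, I should note that a nil clean ring has $\mathbb{Z}_2$ as essentially the smallest nontrivial case, but even $\mathbb{Z}_2$ has only one nonzero element so $G_N(\mathbb{Z}_2)$ is empty (vacuously complete). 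For $|R| \geq 3$, every nonzero element is a vertex and every pair is adjacent, so $G_N(R)$ is complete.

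\medskip

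\emph{The ``only if'' direction.} Suppose $G_N(R)$ is complete. I want to show every element of $R$ is nil clean. Take an arbitrary $a \in R$; I must show $a \in NC(R)$. The idea is to choose a suitable companion element so that completeness of the graph forces $a$ itself (or a controlled modification of $a$) into $NC(R)$. The natural move is to use the identity $1 \in R$: if $a$ and $1$ are both vertices and are adjacent, then $a \cdot 1 = a \in NC(R)$ and we are done. So the crux is to argue that $1$ is a vertex of $G_N(R)$ and that every nonzero $a \neq 1$ is a vertex adjacent to $1$. Here one uses that idempotents and nilpotents are nil clean (so e.g. $1$ is nil clean, being $1 + 0$), hence $1 \cdot y$ is nil clean for suitable $y$, putting $1$ in the vertex set; similarly any nonzero $a$ pairs with something. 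Once all nonzero elements are vertices, completeness gives $a \cdot 1 = a \in NC(R)$ for all nonzero $a$, and $0 = 0 + 0$ is trivially nil clean, so $R$ is nil clean. I would also need to dispatch the trivial/small ring cases ($|R| = 1$ or $2$) where the graph is empty: a one-element ring is nil clean trivially, and $\mathbb{Z}_2$ is nil clean, so the statement holds vacuously there too.

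\medskip

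\emph{Anticipated obstacle.} The main subtlety is the edge cases with very few vertices, where ``complete'' is satisfied vacuously and one cannot extract adjacency information — I expect the cleanest fix is simply to observe directly that any ring with at most two elements is nil clean, so the equivalence is not violated. The other point requiring a little care is making sure $1$ genuinely lies in $V(G_N(R))$ in the ``only if'' direction; this needs the hypothesis that $R$ has at least $3$ elements so that a valid partner $y \notin \{0, 1\}$ exists, and then using that $NC(R)$ contains all idempotents and nilpotents (in particular $0$, which is $1 \cdot 0$ — but $0$ is disallowed as a partner, so instead pair $1$ with, say, an idempotent or with $a$ itself via $1 \cdot 1$, which is excluded too; so in fact pair $1$ with an arbitrary $a \neq 0,1$ and use that *some* such product is nil clean, which follows once we know the graph is nonempty). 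Threading this small-ring bookkeeping correctly is the only real work; the algebraic content is light.
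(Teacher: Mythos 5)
The ``if'' direction of your proposal is fine (and matches the paper, which simply declares it clear). The genuine gap is in the ``only if'' direction, at precisely the point you yourself flagged as the crux: you never actually establish that $1$, or an arbitrary nonzero $a$, lies in $V(G_N(R))$, and this cannot be established from completeness alone. Your fallback --- ``pair $1$ with an arbitrary $a \neq 0,1$ and use that some such product is nil clean, which follows once we know the graph is nonempty'' --- does not follow: nonemptiness of the graph says nothing about whether $1$ has an admissible partner $y \neq 0,1$ with $y \in NC(R)$. Concretely, take $R = \mathbb{Z}_5$: then $NC(\mathbb{Z}_5) = \{0,1\}$, the elements $1$ and $4$ are their own inverses and so have no admissible partner, the vertex set is $\{2,3\}$, and the single edge $2-3$ makes $G_N(\mathbb{Z}_5)$ the complete graph $K_2$ --- yet $\mathbb{Z}_5$ is not nil clean. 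The field $\mathbb{F}_4$ behaves the same way. So under the stated definition of the vertex set the implication ``complete $\Rightarrow$ nil clean'' is simply false, and no small-ring bookkeeping will repair it.

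You should know that the paper's own proof contains exactly the same unjustified step (it asserts ``$x\cdot 1 = x$ is nil clean as $1 \in V(G_N(R))$'' with no argument that $1$, or $x$, is a vertex), so your instinct about where the difficulty sits was correct; you just could not close it because it is not closable. The statement becomes true, and your argument (and the paper's) then works verbatim, if ``complete'' is strengthened to mean that $V(G_N(R)) = R\setminus\{0\}$ and every pair of distinct nonzero elements is adjacent: then $1$ and each nonzero $x$ are vertices, adjacency gives $x = x\cdot 1 \in NC(R)$, and $0 = 0+0$ is nil clean. If you want to salvage your write-up, state and prove that amended version rather than the literal one.
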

\begin{proof}
  Let $G_N(R)$ is a complete and $x\in R$. If $x=0$, then $x$ is nil clean, if $x\neq 0$ then $x.1=x$ is nil clean as $1\in V(G_N(R))$. Converse is clear from the definition of nil clean divisor graph.
\end{proof}

  If $\mathbb{F}$ is a finite field of order $n$, then clearly $NC(\mathbb{F})=\{0,1\}$. Hence for any $x(\neq 0)\in \mathbb{F}$, $x$ is adjacent to only $x^{-1}$, provided $x\neq x^{-1}$. Hence the nil-clean divisor graph of $\mathbb{F}$ is as follows:
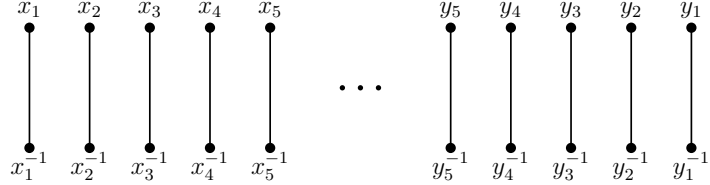
\begin{figure}[H]  
\begin{pspicture}(0,3)(0,-1)
\scalebox{.8}{
\rput(5,0){
\psdot[linewidth=.05](-8,2.5)
\rput(-8,2.8){$x_3$}
\psdot[linewidth=.05](-8,0.5)
\rput(-8, .2){$x_3^{-1}$}
\psline(-8,2.5)(-8,.5)
\psline(-9,2.5)(-9,.5)
\psline(-10,2.5)(-10,.5)
\psline(-7,2.5)(-7,.5)
\psline(-6,2.5)(-6,.5)
\psline(-3,2.5)(-3,.5)
\psline(-2,2.5)(-2,.5)
\psline(-1,2.5)(-1,.5)
\psline(0,2.5)(0,.5)
\psline(1,2.5)(1,.5)

\psdot[linewidth=.05](-9,0.5)
\rput(-9, 0.2){$x_2^{-1}$}
\psdot[linewidth=.05](-10,0.5)
\rput(-10, 0.2){$x_1^{-1}$}
\psdot[linewidth=.05](-7,0.5)
\rput(-7, 0.2){$x_4^{-1}$}
\psdot[linewidth=.05](-6,0.5)
\rput(-6, 0.2){$x_5^{-1}$}
\psdot[linewidth=.05](-9,2.5)
\rput(-9,2.8){$x_2$}
\psdot[linewidth=.05](-10,2.5)
\rput(-10,2.8){$x_1$}
\psdot[linewidth=.05](-7,2.5)
\rput(-7,2.8){$x_4$}
\psdot[linewidth=.05](-6,2.5)
\rput(-6,2.8){$x_5$}

\psdot[linewidth=.009](-4.8,1.5)
\psdot[linewidth=.009](-4.5,1.5)
\psdot[linewidth=.009](-4.2,1.5)
\psdot[linewidth=.05](-3,2.5)
\rput(-3,2.8){$y_5$}
\psdot[linewidth=.05](-2,2.5)
\rput(-2,2.8){$y_4$}
\psdot[linewidth=.05](-1,2.5)
\rput(-1,2.8){$y_3$}
\psdot[linewidth=.05](0,2.5)
\rput(0,2.8){$y_2$}
\psdot[linewidth=.05](1,2.5)
\rput(1,2.8){$y_1$}

\psdot[linewidth=.05](-3,0.5)
\rput(-3,0.2){$y_5^{-1}$}
\psdot[linewidth=.05](-2,0.5)
\rput(-2,0.2){$y_4^{-1}$}
\psdot[linewidth=.05](-1,0.5)
\rput(-1,0.2){$y_3^{-1}$}
\psdot[linewidth=.05](0,0.5)
\rput(0,0.2){$y_2^{-1}$}
\psdot[linewidth=.05](1,0.5)
\rput(1,0.2){$y_1^{-1}$}
}}
\end{pspicture}
\caption{Nil clean divisor graph of $\mathbb{F}$.}\label{2}
\end{figure}
Note that $x_i\neq x_i^{-1}$ and $y_i\neq y_i^{-1}$, otherwise we may get some isolated point as well in the graph.
\begin{Cor}
  For a field $\mathbb{F}$ of order $n$, where $n>2$. If $A=\{a\in \mathbb{F}\,\,:\,\, a=a^{-1}\}$ then the following hold.
  \begin{enumerate}
    \item Diameter of $\mathbb{F}$ is infinite.
    \item $Gr(G_N(\mathbb{F}))=\infty$ and $\omega(G_N(\mathbb{F}))=2$.
    \item $|V(G_N(F))|=n-|A|-1$.
  \end{enumerate}
\end{Cor}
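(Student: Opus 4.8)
The plan is to pin down the full structure of $G_N(\mathbb{F})$ and then read the three statements off from it. In a field the only idempotents are $0$ and $1$ and the only nilpotent is $0$, so $NC(\mathbb{F})=\{0,1\}$, as already noted; hence for nonzero $x,y\in\mathbb{F}$ we have $xy\neq 0$, and so $x$ is adjacent to $y$ precisely when $xy=1$, i.e.\ $y=x^{-1}$. I would therefore record: every vertex $x$ has exactly one neighbour, namely $x^{-1}$; a nonzero $x$ lies in $V(G_N(\mathbb{F}))$ if and only if $x\neq x^{-1}$; and consequently $G_N(\mathbb{F})$ is a $1$-regular graph on $(\mathbb{F}\setminus\{0\})\setminus A$, that is, a disjoint union of the edges $\{x,x^{-1}\}$.

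Granting this structure, $(3)$ is immediate: since $a=a^{-1}$ forces $a$ to be invertible we have $A\subseteq\mathbb{F}\setminus\{0\}$, so $|V(G_N(\mathbb{F}))|=|\mathbb{F}\setminus\{0\}|-|A|=(n-1)-|A|=n-|A|-1$. For $(2)$ I would argue that a disjoint union of edges is a forest, hence acyclic, giving $gr(G_N(\mathbb{F}))=\infty$, and that since every vertex has degree $1$ no three vertices can be pairwise adjacent, so a maximal clique is a single edge and $\omega(G_N(\mathbb{F}))=2$. For $(1)$ I would observe that the edges $\{x,x^{-1}\}$ are exactly the connected components of $G_N(\mathbb{F})$, so two vertices lying on different edges are at infinite distance and therefore $diam(G_N(\mathbb{F}))=\infty$.

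Everything reduces to the single observation that in $G_N(\mathbb{F})$ a nonzero element is joined only to its inverse, so I do not anticipate a genuine difficulty; the one point to handle with care is the size of $A$. Here I would note that $A=\{a\in\mathbb{F}:a^2=1\}$ equals $\{1\}$ when $\mathbb{F}$ has characteristic $2$ and $\{1,-1\}$ otherwise, so $|A|\le 2$. In particular $|V(G_N(\mathbb{F}))|=n-|A|-1$ grows with $n$, which is what guarantees that for all but the smallest fields the graph splits into several $K_2$-components and hence makes the assertions $\omega(G_N(\mathbb{F}))=2$ and $diam(G_N(\mathbb{F}))=\infty$ the intended non-vacuous statements; this small-field check is really the only thing that needs attention.
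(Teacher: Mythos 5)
Your proposal is essentially identical to the paper's (implicit) argument: the paper simply observes that $NC(\mathbb{F})=\{0,1\}$, so each nonzero $x$ is adjacent only to $x^{-1}$ (when $x\neq x^{-1}$), draws the resulting disjoint union of edges as Figure 2, and reads the corollary off that picture. The small-field check you flag at the end is in fact more than a point needing attention --- it is a genuine defect of the statement as printed: for $n=4$ or $n=5$ one has $|V(G_N(\mathbb{F}))|=2$, so the graph is a single edge with diameter $1$ rather than $\infty$, and for $n=3$ the vertex set is empty; part (1) (and the non-vacuity of (2)) really requires $n\geq 7$, which neither the paper nor your write-up makes explicit.
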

\begin{Thm}
  If $R$ has a non trivial idempotent or non trivial nilpotent element, then the girth of $G_N(R)$ is $3$.
\end{Thm}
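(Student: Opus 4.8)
The plan is to exhibit an explicit triangle in $G_N(R)$ in each of the two cases. Since every cycle in a simple graph has length at least $3$, producing one $3$-cycle immediately gives $gr(G_N(R)) = 3$. The only facts about nil clean elements I will use are that $Nil(R) \subseteq NC(R)$, that every idempotent lies in $NC(R)$, and that $Nil(R)$ is an ideal (so it is closed under sums) because $R$ is commutative.

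First I would treat the case where $R$ has a non-trivial nilpotent $n$, i.e. $n \neq 0$. I claim $\{n,\,1,\,1+n\}$ is a triangle. The three are pairwise distinct: $n \neq 1+n$ since $1 \neq 0$, $1 \neq 1+n$ since $n \neq 0$, and $n \neq 1$ since a nilpotent unit would force $R = 0$. For adjacency: $n\cdot 1 = n \in Nil(R) \subseteq NC(R)$; $\,1\cdot(1+n) = 1+n \in NC(R)$, being the idempotent $1$ plus the nilpotent $n$; and $n(1+n) = n + n^2 \in Nil(R) \subseteq NC(R)$. Each of the three elements thus has a distinct nonzero nil clean partner, hence is a vertex, so we obtain a $3$-cycle.

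Next, the case where $R$ has a non-trivial idempotent $e$, i.e. $e^2 = e$ with $e \neq 0,1$. Then $1-e$ is also a non-trivial idempotent and $e(1-e) = e - e^2 = 0$. I claim $\{e,\,1-e,\,1\}$ is a triangle. Distinctness: $e \neq 0$, $1-e \neq 0$ (as $e \neq 1$), $1 \neq e$, $1 \neq 1-e$ (as $e \neq 0$), and $e \neq 1-e$, for otherwise $2e = 1$ and multiplying by $e$ gives $2e = e$, whence $e = 1$, a contradiction. For adjacency: $e(1-e) = 0 \in NC(R)$, $\,e\cdot 1 = e \in NC(R)$, and $(1-e)\cdot 1 = 1-e \in NC(R)$. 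Again all three are vertices, so we get a $3$-cycle, and in either case $gr(G_N(R)) = 3$.

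I do not expect a genuine obstacle here; the only points requiring care are checking that the three chosen elements are really distinct and really vertices, that is, ruling out the degenerate collapses $n = 1$ in the first case and $e = 1-e$ in the second, both of which are handled above using that $R$ has a nonzero identity.
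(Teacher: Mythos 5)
Your proposal is correct and uses exactly the same two triangles as the paper, namely $1$--$e$--$(1-e)$--$1$ in the idempotent case and $1$--$n$--$(n+1)$--$1$ in the nilpotent case; the only difference is that you spell out the distinctness and vertex-membership checks that the paper leaves implicit. No further comment is needed.
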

\begin{proof}
  If $R$ has a non trivial idempotent $e$, then $\{0,1,e,1-e\}\subset  NC(R)$ and we get a cycle $1 - e - (1-e) - 1$. Also if  $R$ has a non trivial nilpotent $n$, then $\{0,1,n,n+1\} \subset NC(R)$. In this case $1 - n - (n+1) - 1$ is a cycle in $G_N(R)$.
\end{proof}
\begin{Thm}
  If $R$ has only trivial idempotents and trivial nilpotent, then girth of $G_N(R)$ is infinite.
\end{Thm}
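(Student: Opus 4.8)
The plan is to first show that the hypothesis forces $R$ to be a field, and then argue directly that the nil clean divisor graph of a field contains no cycle.

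For the first step, recall that a finite commutative ring is a finite direct product of local rings. If $R$ admitted a nontrivial decomposition $R \cong R_1 \times R_2$, then $(1,0)$ would be a nontrivial idempotent; hence the assumption that $R$ has only trivial idempotents forces $R$ to be local, say with maximal ideal $\mathfrak{m}$. In a finite (indeed Artinian) local ring the maximal ideal is nilpotent, so every element of $\mathfrak{m}$ is nilpotent. Since by hypothesis the only nilpotent element is $0$, we get $\mathfrak{m} = 0$, so $R$ is a field $\mathbb{F}$.

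Next I would record that $NC(\mathbb{F}) = \{0,1\}$: the only idempotents of $\mathbb{F}$ are $0$ and $1$, the only nilpotent is $0$, so the sums $e+n$ with $e$ idempotent and $n$ nilpotent are exactly $0$ and $1$. In particular, for nonzero $x,y \in \mathbb{F}$ the edge condition $xy \in NC(\mathbb{F})$ forces $xy = 1$ (it cannot equal $0$ in a field), i.e.\ $y = x^{-1}$. Thus each vertex $x$ is adjacent to exactly one vertex, namely $x^{-1}$ (and $x$ is a vertex only when $x \neq x^{-1}$), so $G_N(\mathbb{F})$ is a disjoint union of single edges, exactly as depicted for fields above.

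Finally, suppose toward a contradiction that $G_N(R) = G_N(\mathbb{F})$ contains a cycle, and take a shortest one $x_1 - x_2 - \cdots - x_k - x_1$ with $k \ge 3$ and the $x_i$ pairwise distinct. By the previous paragraph $x_2 = x_1^{-1}$ and then $x_3 = x_2^{-1} = x_1$, contradicting $x_1 \neq x_3$. Hence $G_N(R)$ has no cycle and $gr(G_N(R)) = \infty$. The only point that needs a little care — and the main thing to get right — is the reduction to the field case; once that is in hand the graph-theoretic part is immediate from the structure of $G_N(\mathbb{F})$ just described (and it is consistent with the degenerate case $R = \mathbb{Z}_2$, where $V(G_N(R))$ is empty and the conclusion holds vacuously).
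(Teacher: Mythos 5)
Your proof is correct and follows essentially the same route as the paper: reduce the hypothesis to the statement that $R$ is a field, then observe that $G_N(\mathbb{F})$ is a disjoint union of single edges (each vertex adjacent only to its inverse, since $NC(\mathbb{F})=\{0,1\}$ and $xy=0$ is impossible for nonzero $x,y$) and hence acyclic. The only difference is that the paper obtains the field reduction by citing Lemma 2.6 of Basnet--Bhattacharyya, whereas you prove it directly from the structure of finite commutative rings; your version is self-contained but otherwise identical in substance.
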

\begin{proof}
  Since $R$ has only trivial idempotents and trivial nilpotent so by Lemma $2.6$ \cite{ncg}, $R$ is a field. Hence the result.
\end{proof}
\begin{Thm}\label{T2.6}
Let $R$ be a ring. Then the following hold.
\begin{enumerate}
  \item Either $R$ is a field or $G_N(R)$ is connected.
  \item $diam(R)=\infty$ or $diam(R)\leq 3$.
  \item $gr(G_N(R))=\infty$ or $gr(G_N(R))=3$.
\end{enumerate}
\end{Thm}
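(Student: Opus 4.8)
The plan is to dispatch all three items together, using the fact (already invoked in the proof of Theorem 2.5, via Lemma 2.6 of \cite{ncg}) that a finite commutative ring is a field exactly when it has only trivial idempotents and trivial nilpotent. Item (3) is then immediate: if $R$ is not a field it has a non-trivial idempotent or a non-zero nilpotent, so $gr(G_N(R)) = 3$ by Theorem 2.4; and if $R$ is a field, $gr(G_N(R)) = \infty$ by Theorem 2.5. For items (1) and (2) the field case is handled by Corollary 2.3 when $|R| > 2$ and is trivial (the graph is empty) when $R \cong \mathbb{Z}_2$, so what remains is to prove: \emph{if $R$ is not a field then $G_N(R)$ is connected and $diam(G_N(R)) \le 3$}.

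A non-field $R$ is either non-reduced or reduced. If $R$ has a non-zero nilpotent $n$, I would first note that every non-zero $x \in R$ is a vertex --- if $x \ne n$ then $xn \in Nil(R) \subseteq NC(R)$, and $n$ itself qualifies via $n\cdot 1 = n \in NC(R)$ --- and then that $n$ is adjacent to every other vertex, because $nx$ is nilpotent for every $x$ by commutativity. Thus $n$ is a universal vertex, so $G_N(R)$ is connected with $diam(G_N(R)) \le 2$.

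Now suppose $R$ is reduced; being finite it is a product $R = \mathbb{F}_1 \times \cdots \times \mathbb{F}_k$ of fields, with $k \ge 2$ since $R$ is not a field, and here $NC(R)$ is precisely the set of idempotents, i.e. the vectors with all entries in $\{0,1\}$. Two observations run the argument: (i) elements $u,v$ with disjoint supports satisfy $uv = 0 \in NC(R)$, so if they are non-zero and distinct they are adjacent; and (ii) every vertex $x$ is adjacent to an element whose support is a single coordinate --- one may take the $m$-th standard idempotent $e_m$ for any $m$ with $x_m = 0$ when $x$ is a non-unit, or $x^{-1}e_m$ for any $m$ when $x$ is a unit. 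Given distinct vertices $x, z$: if some coordinate vanishes in both then $x - e_j - z$ is a path of length $\le 2$; otherwise I would choose distinct indices $p \ne q$ together with a neighbour $a$ of $x$ with support $\{p\}$ and a neighbour $b$ of $z$ with support $\{q\}$, so that $a \sim b$ by (i) and $x - a - b - z$ is a walk of length $3$, giving $d(x,z) \le 3$.

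The one delicate point is securing $p \ne q$, which I would settle by cases on whether $x$ and $z$ are units: if $x$ is a non-unit, take $p$ with $x_p = 0$ (then necessarily $z_p \ne 0$, so a zero coordinate of $z$, or any index other than $p$ if $z$ is a unit, serves as $q$); and if $x$ is a unit, take $q$ from a zero coordinate of $z$ (or $(p,q)=(1,2)$ when $z$ too is a unit) and put $a = x^{-1}e_p$. In every branch the remaining conditions --- $a,b \ne 0$, $a \ne x$, $b \ne z$, $a \ne b$, and that all four are genuine vertices --- are forced by $k \ge 2$ upon comparing supports. This yields $diam(G_N(R)) \le 3$ and in particular connectedness, completing (1) and (2). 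The index bookkeeping in this reduced case is the only real work; everything else is citation or the single-line universal-vertex observation.
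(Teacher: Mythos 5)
Your proposal is correct, and its skeleton matches the paper's: split into the non-reduced case (where a non-zero nilpotent $n$ is a universal vertex, since $nx\in Nil(R)\subseteq NC(R)$ for all $x$ by commutativity, giving $diam\leq 2$) and the reduced case (where either $R$ is a field or orthogonal idempotents furnish a path of length $\leq 3$), with item (3) delegated to the earlier girth theorems. Where you genuinely diverge is in the reduced non-field case: the paper keeps a single abstract non-trivial idempotent $e$ and writes the path $x - x_1e - y_1(1-e) - y$, where $xx_1,\,yy_1\in NC(R)=Idem(R)$, without checking that the intermediate vertices are non-zero, distinct from the endpoints, and distinct from each other; in fact $x_1e$ can vanish (take $R=\mathbb{F}_1\times\mathbb{F}_2$, $e=(1,0)$, $x=(0,u)$, $x_1=(0,u^{-1})$), so the paper's path needs exactly the kind of repair you perform. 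You instead invoke the decomposition of a finite reduced commutative ring as a product of fields $\mathbb{F}_1\times\cdots\times\mathbb{F}_k$ with $k\geq 2$, identify $NC(R)$ with the $0$--$1$ vectors, and route every pair of vertices through elements of singleton support, doing the support bookkeeping explicitly. This costs you the structure theorem (a heavier input than the paper uses, though entirely standard for finite commutative rings) but buys a complete argument in which the degenerate cases ($a\neq 0$, $a\neq x$, $a\neq b$, etc.) are actually verified; your observation that a coincidence such as $x=b$ only shortens the walk is the right way to close the remaining gap. The proof is sound as written.
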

\begin{proof}
  Suppose $R$ is a reduced ring. \\
  Case (I): If $R$ has no non trivial idempotent, then $R$ is a field. \\
  Case (II): If $R$ has a non trivial idempotent, say $e\in Idem(R)$, then for any $x,y\in V(G_N(R))$, there exist $x_1,y_1\in V(G_N(R))$, such that $xx_1, yy_1\in NC(R)=Idem(R)$. So, we have a path $x-x_1e-y_1(1-e)-y$ from $x$ to $y$.\par
  If $R$ is not a reduced ring, then there exists $n\in Nil(R)$, such that $x-n-y$ is a path from $x$ to $y$, for any $x,y\in V(G_N(R))$. Hence (1) and (2) follow from the above observations and Figure \ref{2}.\\
  (3) If $R$ is reduced, then either $R$ is a field or there exists a non trivial idempotent $e\in R$, such that $1-e-(1-e)-1$ is a cycle. So, $gr(G_N(R))=\infty$ or $gr(G_N(R))=3$. If $R$ is a non reduced ring, then since nilpotent graph is a subgraph of nil clean divisor graph, so from Theorem $2.1$ \cite{LI}, $gr(G_N(R))=3$.

\end{proof}
\begin{Cor}
  If $R$ is not a reduced ring, then $diam(R)\leq 2$.
\end{Cor}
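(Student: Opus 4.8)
The plan is to reuse the path constructed in the proof of Theorem~\ref{T2.6}(2) and simply observe that it has length~$2$. Since $R$ is not reduced, the first move is to fix a nonzero nilpotent element $n\in Nil(R)$ and to record that $n$ is actually a vertex of $G_N(R)$: one checks $n\neq 1$ (otherwise $1$ would be nilpotent, forcing $R=0$), so $n\cdot 1=n\in Nil(R)\subseteq NC(R)$, which both exhibits a neighbour of $n$ and places $n\in V(G_N(R))$.

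The second step is the key commutativity observation: for every nonzero $x\in R$ the product $xn$ is again nilpotent, hence $xn\in NC(R)$. Consequently, whenever $x\in V(G_N(R))$ with $x\neq n$, the vertices $x$ and $n$ are adjacent in $G_N(R)$; in other words, $n$ serves as a universal ``hub'' for all vertices different from itself.

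With this in hand I would take two arbitrary distinct vertices $x,y\in V(G_N(R))$ and split into cases. If $x=n$ (or, symmetrically, $y=n$), then $xy=ny$ is nilpotent, so $xy\in NC(R)$ and $x$ is adjacent to $y$, giving $d(x,y)=1$. Otherwise $x\neq n$ and $y\neq n$, so by the previous step $x-n-y$ is a path in $G_N(R)$; since $x\neq y$ it is a genuine path of length~$2$, whence $d(x,y)\leq 2$. In every case $d(x,y)\leq 2$, and therefore $diam(G_N(R))\leq 2$.

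I do not expect a genuine obstacle here: the statement is essentially a quantitative sharpening of Theorem~\ref{T2.6}(2), and the only points that require a little care are confirming that the hub $n$ is indeed a vertex and disposing of the degenerate situations in which one of $x,y$ coincides with $n$.
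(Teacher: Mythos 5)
Your proposal is correct and follows exactly the route the paper intends: the corollary is stated without a separate proof because it is immediate from the observation in the proof of Theorem~\ref{T2.6} that for non-reduced $R$ a nonzero nilpotent $n$ gives a path $x-n-y$ for all vertices $x,y$. Your only additions are the (worthwhile) bookkeeping that $n$ is indeed a vertex and the degenerate case $x=n$ or $y=n$, which the paper leaves implicit.
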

\begin{Cor}
  A ring $R$ is a field if and only if nil clean divisor graph of $R$ is bipartite.
\end{Cor}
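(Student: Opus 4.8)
The plan is to deduce both directions from the girth computations already established, using the classical fact that a simple graph is bipartite if and only if it contains no cycle of odd length.

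For the forward direction, suppose $R$ is a field. If $|R|=2$ then $G_N(R)$ has no vertices and is trivially bipartite, so assume $|R|>2$. Then $NC(R)=\{0,1\}$ and, as displayed in Figure \ref{2}, $G_N(R)$ is a disjoint union of edges $x_i-x_i^{-1}$ together with possibly some isolated vertices; in particular it contains no cycle at all, so its girth is infinite (as recorded in the Corollary following Figure \ref{2}). A forest is bipartite, hence $G_N(R)$ is bipartite.

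For the converse I would argue contrapositively: assume $R$ is not a field and show $G_N(R)$ is not bipartite. Since $R$ is a finite commutative ring it is a finite product of local rings; if there are at least two factors the component idempotents are non trivial, and if $R$ is local but not a field then its nonzero maximal ideal consists of nilpotents, so $R$ has a non trivial nilpotent. (This is exactly the contrapositive of Lemma $2.6$ of \cite{ncg}, the structural fact already used in the proof of Theorem $2.4$.) Then the earlier theorem stating that a ring with a non trivial idempotent or a non trivial nilpotent has girth $3$ furnishes a triangle in $G_N(R)$, i.e.\ an odd cycle, so $G_N(R)$ is not bipartite. Combining the two implications gives the corollary; I expect no genuine obstacle, since the only non-graph-theoretic ingredient --- that failing to be a field forces a non trivial idempotent or nilpotent --- has already been isolated earlier in this section, and the rest is the standard odd-cycle characterization of bipartiteness.
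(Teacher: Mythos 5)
Your proof is correct and follows essentially the same route as the paper: the forward direction reads off Figure \ref{2}, and the converse reduces to the girth dichotomy, using that a non-field has a non-trivial idempotent or nilpotent and hence a triangle (Theorem 2.4 together with Lemma 2.6 of \cite{ncg}). The paper phrases the converse directly (bipartite $\Rightarrow$ girth $\neq 3$ $\Rightarrow$ girth $=\infty$ $\Rightarrow$ field) rather than contrapositively, and leaves the last implication implicit, but the underlying argument is identical to yours.
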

\begin{proof}
  $\Rightarrow$ Trivial.\\
  $\Leftarrow$ If nil clean divisor graph of $R$ is bipartite then $gr(G_N(R))\neq 3$. So from Theorem \ref{T2.6}, $gr(G_N(R))=\infty$ and hence $R$ is a field.
\end{proof}
\begin{Thm}
  For a ring $R$, the following are equivalent.
  \begin{enumerate}
    \item $G_N(R)$ is a star graph.
    \item $R \cong \mathbb{Z}_5$.
  \end{enumerate}
\end{Thm}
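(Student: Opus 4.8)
The plan is to prove $(1)\Leftrightarrow(2)$, with the content lying in $(1)\Rightarrow(2)$; the direction $(2)\Rightarrow(1)$ is a direct computation of $G_N(\mathbb{Z}_5)$. Since $\mathbb{Z}_5$ is a field, $NC(\mathbb{Z}_5)=\{0,1\}$, so by the discussion preceding Corollary~2.3 (and Figure~\ref{2}) each nonzero element of $\mathbb{Z}_5$ is adjacent only to its inverse, and only when the two differ. Here $A=\{a\in\mathbb{Z}_5:a=a^{-1}\}=\{1,4\}$, so $|V(G_N(\mathbb{Z}_5))|=5-|A|-1=2$; explicitly $V=\{2,3\}$ with $2\cdot 3=1\in NC(\mathbb{Z}_5)$, so $G_N(\mathbb{Z}_5)$ is a single edge $K_{1,1}$, a star.

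For $(1)\Rightarrow(2)$, assume $G_N(R)$ is a star. The first step is to force $R$ to be a field: a star graph is a tree, hence bipartite, so by the corollary characterising fields as exactly the rings whose nil clean divisor graph is bipartite, $R$ is a field, say $|R|=n$. (Equivalently, a star is acyclic, so $gr(G_N(R))=\infty$, and Theorem~2.4 — any ring with a non-trivial idempotent or non-trivial nilpotent has girth $3$ — then rules out non-trivial idempotents and nilpotents, so $R$ is a field as in the proof of Theorem~\ref{T2.6}.) The second step identifies the field: by the description preceding Corollary~2.3 and by Figure~\ref{2}, $G_N(R)$ is a disjoint union of edges, one per inverse-pair $\{x,x^{-1}\}$ with $x\neq x^{-1}$. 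Since a star is connected, exactly one such edge occurs, so $|V(G_N(R))|=2$, and Corollary~2.3(3) gives $n-|A|-1=2$ with $A=\{a\in R:a=a^{-1}\}$. In a field $a=a^{-1}$ forces $a^{2}=1$, i.e.\ $a\in\{1,-1\}$, so $|A|=2$; hence $n=5$ and $R\cong\mathbb{Z}_5$.

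I expect the main obstacle to be the first step of $(1)\Rightarrow(2)$: recognising that the star hypothesis on $G_N(R)$ is strong enough to kill every non-trivial idempotent and nilpotent of $R$. The point is to isolate the purely graph-theoretic content that a star is acyclic (equivalently bipartite), which is exactly what Theorem~2.4 (or the bipartiteness corollary) converts into the statement that $R$ is a field. Once that is in hand, the remainder is only the short vertex count above, applied to the already-established shape of $G_N$ of a finite field.
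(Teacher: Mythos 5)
Your strategy coincides with the paper's: the printed proof consists of exactly your first step (a star is acyclic, so $gr(G_N(R))=\infty$, which by the earlier results forces $R$ to be a field), and it leaves the identification of the field to the reader, which is the part you fill in. Your direction $(2)\Rightarrow(1)$ and your reduction of $(1)$ to the case of a field are both correct and match the paper.

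The gap is in your final count. In a field, $a=a^{-1}$ is equivalent to $a^2=1$, i.e.\ $(a-1)(a+1)=0$, so $A=\{1,-1\}$; but in characteristic $2$ one has $1=-1$, hence $|A|=1$, not $2$. Then $n-|A|-1=2$ also admits $n=4$, and $\mathbb{F}_4$ really does occur: writing $\mathbb{F}_4=\{0,1,\alpha,\alpha+1\}$ with $\alpha^2=\alpha+1$, we get $\alpha(\alpha+1)=\alpha^2+\alpha=1\in NC(\mathbb{F}_4)$ and $\alpha\neq\alpha+1$, so $G_N(\mathbb{F}_4)$ is the single edge on $\{\alpha,\alpha+1\}$ --- the same graph $K_{1,1}$ as $G_N(\mathbb{Z}_5)$. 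So your argument only yields $R\cong\mathbb{Z}_5$ or $R\cong\mathbb{F}_4$; and since $K_{1,1}$ must count as a star for your (correct) verification of $(2)\Rightarrow(1)$ to go through, $\mathbb{F}_4$ is in fact a counterexample to $(1)\Rightarrow(2)$ as stated. The paper's one-line proof glosses over exactly this point, so the defect is shared with the source, but in your write-up the concrete false step is the sentence asserting $|A|=2$ with no assumption on the characteristic; a correct version of the theorem must either add $\mathbb{F}_4$ to item $(2)$ or adopt a convention on ``star graph'' that excludes $K_{1,1}$, which would then destroy the $\mathbb{Z}_5$ direction.
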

\begin{proof}
  The result follows from the fact that $gr(G_N(R))=\infty$ if and only if $R$ is a field.
\end{proof}
\begin{Thm}
 For any ring $R$, $\omega(G_N(R))\geq$ max$\{|Nil(R)|, |Idem(R)|-1\}$.
\end{Thm}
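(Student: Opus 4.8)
The plan is to prove the inequality by exhibiting two explicit cliques of $G_N(R)$, one of size $|Nil(R)|$ and one of size $|Idem(R)|-1$, and then using that $\omega(G_N(R))$ is at least the size of any clique. First I would dispose of the degenerate case: if $R$ is reduced and has only trivial idempotents, then by Lemma $2.6$ \cite{ncg} (as used in the proof of Theorem \ref{T2.6}) $R$ is a field, $|Nil(R)|=1$, $|Idem(R)|-1=1$, and the asserted bound is the trivial statement $\omega(G_N(R))\geq 1$ (which is visible from Figure \ref{2} whenever the graph is nonempty). So from now on one may assume $R$ is not a field.

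The first clique comes from the nilpotents. I would take $S_1=(Nil(R)\setminus\{0\})\cup\{1\}$. Since $R$ is a nonzero ring, $1\notin Nil(R)$, so $|S_1|=|Nil(R)|$. For adjacency: because $R$ is commutative, the product of two nilpotent elements is nilpotent, and every nilpotent $n$ satisfies $n=0+n\in NC(R)$; also $1\cdot n=n\in NC(R)$ for each $n\in Nil(R)$. Hence any two distinct elements of $S_1$ are adjacent. The point that needs a line of care is that $S_1\subseteq V(G_N(R))$: if $Nil(R)\neq\{0\}$, fix a nonzero nilpotent $n$; then every $n'\in Nil(R)\setminus\{0\}$ has the witness $1$ (distinct from $n'$ and nonzero), and $1$ itself has the witness $n$. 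If instead $Nil(R)=\{0\}$, then $R$ is reduced and, not being a field, has a nontrivial idempotent, so the trivial bound from $S_1$ is subsumed by the second clique below.

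The second clique comes from the idempotents. I would take $S_2=Idem(R)\setminus\{0\}$, which contains $1$ and has $|Idem(R)|-1$ elements. For idempotents $e,f$ one has $(ef)^2=e^2f^2=ef$, so $ef\in Idem(R)\subseteq NC(R)$; thus any two distinct elements of $S_2$ are adjacent. For membership in $V(G_N(R))$: every nontrivial idempotent $e$ has the witness $1$ (since $e\cdot 1=e\in NC(R)$ and $1\neq 0,e$), and if $Idem(R)$ contains a nontrivial idempotent then $1$ has that idempotent as a witness; if $Idem(R)=\{0,1\}$ the clique $S_2=\{1\}$ only gives $\omega\geq 1$, which is again covered by $S_1$ unless $R$ is a field. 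Putting the two cliques together, $\omega(G_N(R))\geq\max\{|S_1|,|S_2|\}=\max\{|Nil(R)|,|Idem(R)|-1\}$.

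I do not expect a serious obstacle here; the only delicate bookkeeping is verifying that each proposed vertex genuinely lies in $V(G_N(R))$ (producing for it a second, distinct, nonzero divisor) and correctly routing the degenerate "field'' case, which is why I would handle that case first. (One could in fact do slightly better by observing that $e\cdot n$ is nilpotent for $e\in Idem(R)$, $n\in Nil(R)$, so $\big(Idem(R)\setminus\{0\}\big)\cup\big(Nil(R)\setminus\{0\}\big)$ is itself a clique; but for the stated inequality the two separate cliques suffice.)
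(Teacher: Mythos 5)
Your proof takes essentially the same route as the paper's, which simply observes that $Nil(R)$ and $Idem(R)$ each induce complete subgraphs of $G_N(R)$ (since products of nilpotents are nilpotent and products of idempotents are idempotent, hence nil clean). Your write-up is in fact the more careful of the two: because $0$ is not a vertex, the set $Nil(R)$ itself only yields a clique of size $|Nil(R)|-1$, and your substitution of $1$ for $0$ is exactly what is needed to reach the stated bound $|Nil(R)|$; the degenerate field/empty-graph cases you flag are glossed over by the paper as well.
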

\begin{proof}
  From the definition of nil clean divisor graph, we observe that $Nil(R)$ and $Idem(R)$ respectively induces a complete subgraph of $G_N(R)$.
\end{proof}
Next we strudy about nil clean divisor graph of weakly nil clean ring.
\begin{Thm}
  Let $R$ be a weakly nil clean ring which is not nil clean. Then $\omega(G_N(R))\geq [\frac{|R|}{2}]$ and $diam(R)=2$ if $|R|(>3)$ is even, where $[x]$ is the greatest integer function.
\end{Thm}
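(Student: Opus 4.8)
The plan is to treat the clique bound and the diameter separately, reducing both to a description of $NC(R)$ and, in the reduced case, to the classification of reduced weakly nil clean rings as $\mathbb{Z}_2^{\,a}\times\mathbb{Z}_3$.

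For the clique bound, the first step is the observation that $NC(R)$ is closed under multiplication: writing $x=e+n$ and $y=f+m$ with $e,f$ idempotent and $n,m$ nilpotent, one has $xy=ef+(em+nf+nm)$, where $ef$ is idempotent (commutativity is used here) while $em+nf+nm$ is nilpotent, so $xy\in NC(R)$. Hence the induced subgraph on $NC(R)\setminus\{0\}$ is complete. The second step is to check that these $|NC(R)|-1$ elements really are vertices of $G_N(R)$: an element $x\ne 1$ is paired with $1$, and $1$ is paired with any member of $NC(R)\setminus\{0,1\}$, which is nonempty because $|NC(R)|\ge 3$ --- indeed $NC(R)=\{0,1\}$ would force $R$ to have no nontrivial idempotent and no nonzero nilpotent, hence to be a field, and a weakly nil clean field has order at most $3$, contradicting $|R|>3$. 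This yields $\omega(G_N(R))\ge |NC(R)|-1$, and it remains to bound $|NC(R)|$ from below.

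The third step exploits weak nil cleanness: for $x\in R$ we have $x=e+n$ or $x=-e+n$; in the first case $x\in NC(R)$, and in the second $-x=e+(-n)\in NC(R)$, so $x\in -NC(R)$. Thus $R=NC(R)\cup(-NC(R))$, and since negation is a bijection, $|R|=2\,|NC(R)|-|NC(R)\cap(-NC(R))|$. Now $Nil(R)\subseteq NC(R)\cap(-NC(R))$ (if $n$ is nilpotent so is $-n$, and both are nil clean), so when $R$ is not reduced this intersection has at least two elements; when $R$ is reduced, the classification gives $R\cong\mathbb{Z}_2^{\,a}\times\mathbb{Z}_3$, and $|R|>3$ forces $a\ge 1$, so the nonzero idempotent $(\mathbf 1,0)$ (identity on $\mathbb{Z}_2^{\,a}$, zero on $\mathbb{Z}_3$) is its own negative and lies in $NC(R)\cap(-NC(R))$, which again has at least two elements. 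In every case $|NC(R)\cap(-NC(R))|\ge 2$, hence $|NC(R)|\ge \frac{|R|}{2}+1$ and $\omega(G_N(R))\ge |NC(R)|-1\ge \frac{|R|}{2}=[\frac{|R|}{2}]$.

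For the diameter I would first get $diam(G_N(R))\ge 2$: since $R$ is not nil clean, $G_N(R)$ is not complete by Theorem~2.2, and since $R$ is not a field (a weakly nil clean non-nil-clean field is $\mathbb{Z}_3$, of order $3$), $G_N(R)$ is connected by Theorem~\ref{T2.6}; a connected non-complete graph on at least two vertices has diameter $\ge 2$. For $diam(G_N(R))\le 2$: if $R$ is not reduced this is Corollary~2.7, while if $R$ is reduced then $R\cong\mathbb{Z}_2^{\,a}\times\mathbb{Z}_3$ with $a\ge 1$, so $NC(R)=\mathbb{Z}_2^{\,a}\times\{0,1\}$ and the nonzero idempotent $z=(\mathbf 1,0)$ satisfies $z(u,t)=(u,0)\in NC(R)$ for every $(u,t)\in R$; thus $z$ is a vertex adjacent to every other vertex, so any two distinct vertices $x,y$ lie on a path $x-z-y$ (or are already adjacent when one of them is $z$), giving $d(x,y)\le 2$. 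Hence $diam(G_N(R))=2$. The fiddly but routine points are the vertex verifications (that $1$, the idempotent $z$, and each element of $NC(R)\setminus\{0\}$ admit an admissible partner outside $\{0,x\}$); the substantive inputs are the multiplicative closure of $NC(R)$ and the classification $R\cong\mathbb{Z}_2^{\,a}\times\mathbb{Z}_3$ in the reduced case, and it is the latter --- producing an idempotent adjacent to all vertices --- that I expect to be the main obstacle, and where the parity hypothesis does its work by ensuring $a\ge 1$.
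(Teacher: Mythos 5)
Your proof is correct, and while it shares the paper's two central ingredients --- the multiplicative closure of $NC(R)$ and the covering $R=NC(R)\cup(-NC(R))$ coming from weak nil cleanness --- it diverges in how the universal vertex and the sharp clique count are obtained. The paper extracts both from a single parity observation: when $|R|$ is even, $|R\setminus\{0\}|$ is odd, so the equinumerous sets $NC(R)\setminus\{0\}$ and $(-NC(R))\setminus\{0\}$, which cover $R\setminus\{0\}$, must meet in some $a\neq 0$; such an $a$ is adjacent to every vertex $x$, since $ax$ is either the product of the nil clean elements $a$ and $x$ or of the nil clean elements $-a$ and $-x$, and $x-a-y$ gives $diam=2$. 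You instead split into the non-reduced case (where a nonzero nilpotent plays this role, via Corollary 2.7) and the reduced case, where you import the classification $R\cong\mathbb{Z}_2^{\,a}\times\mathbb{Z}_3$ and exhibit the idempotent $(\mathbf 1,0)$; this works, but it leans on an external structure theorem that the paper's counting argument avoids, and your idempotent $(\mathbf 1,0)$ is precisely an element of $NC(R)\cap(-NC(R))$, so the two constructions land on the same kind of vertex. On the clique side, your extra step $|NC(R)\cap(-NC(R))|\geq 2$, hence $|NC(R)|\geq |R|/2+1$, is genuinely needed and in fact repairs an off-by-one in the paper: for even $|R|$ the bound $|NC(R)|\geq |R|/2$ only yields a clique of size $|R|/2-1$ after discarding $0$, whereas your refinement (equally obtainable from the paper's own parity argument, had it been applied here) delivers the stated $[\,|R|/2\,]$. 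Your vertex-membership checks and the lower bound $diam\geq 2$ (via connectedness plus a non-adjacent pair such as $1$ and a non-nil-clean element) are at least as careful as what the paper records.
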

\begin{proof}
  As $x\in WNC(R)$ implies $-x\in NC(R)$, so if $|R|$ is even, then $|NC(R)|\geq \frac{|R|}{2}$ and if $|R|$ is odd, then $|NC(R)|\geq \frac{|R|+1}{2}$. Since $R$ is commutative, so product of any two nil clean element is also a nil clean element. Hence $\omega(G_N(R))\geq [\frac{|R|}{2}]$.\par
  Since $|R|>3$, so $R$ is not a field and hence $G_N(R)$ is connected. As $|R\setminus \{0\}|$ is odd, so there exists an element $a\in R$ such that $x\in NC(R)\cap WNC(R)$. Hence for any $x,y\in R$, $x-a-y$ is a path in $G_N(R)$ and $diam(G_N(R))= 2$ as $R$ is not a nil clean ring.
\end{proof}
\section{Nil clean divisor graph of $\mathbb{Z}_{2p}$ and $\mathbb{Z}_{3p}$, for any odd prime $p$}
In this section we study the structures of $G_N(\mathbb{Z}_{2p})$ and $G_N(\mathbb{Z}_{3p})$, for any odd prime $p$.
\begin{Lem}\label{3.1}
 If $a\in V(G_N(\mathbb{Z}_{2p}))$, where $p$ is an odd prime, then the following hold.
  \begin{enumerate}
    \item If $a=p$, then $deg(a)=2p-2$.
    \item If $a\in \{1,p-1,p+1,2p-1\}$, then $deg(a)=2$.
    \item Otherwise $deg(a)=3$
  \end{enumerate}
\end{Lem}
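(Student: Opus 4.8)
The plan is to pass through the Chinese Remainder isomorphism $\mathbb{Z}_{2p}\cong\mathbb{Z}_2\times\mathbb{Z}_p$ and count neighbours one coordinate at a time. First I would identify the nil clean elements: since $2p$ is squarefree, $\mathbb{Z}_{2p}$ is reduced, so $NC(\mathbb{Z}_{2p})=Idem(\mathbb{Z}_{2p})$, and solving $x^2=x$ modulo $2p$ (equivalently modulo $2$ and modulo $p$ separately) gives $NC(\mathbb{Z}_{2p})=\{0,1,p,p+1\}$. Under the isomorphism these correspond to $(0,0),(1,1),(1,0),(0,1)$, so $NC(\mathbb{Z}_2\times\mathbb{Z}_p)=\{0,1\}\times\{0,1\}$; I would also record the correspondences $p\leftrightarrow(1,0)$, $p+1\leftrightarrow(0,1)$, $p-1\leftrightarrow(0,-1)$, $1\leftrightarrow(1,1)$ and $2p-1\leftrightarrow(1,-1)$, which are immediate from parity and reduction mod $p$, and note that for an odd prime $p$ the residues $1,p-1,p,p+1,2p-1$ are pairwise distinct.

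The crucial observation is that, writing $a=(a_1,a_2)$ and $y=(y_1,y_2)$, we have $ay=(a_1y_1,a_2y_2)$, and since every element of $\mathbb{Z}_2$ already lies in $\{0,1\}$, the condition $ay\in NC$ is equivalent to the single requirement $a_2y_2\in\{0,1\}$ in $\mathbb{Z}_p$, with $y_1$ free. Hence the set $\{y:ay\in NC\}$ has exactly $2t$ elements, where $t=|\{c\in\mathbb{Z}_p:a_2c\in\{0,1\}\}|$; and since $0$ always lies in this set while the vertex $a$ lies in it precisely when $a^2\in NC$, we obtain $deg(a)=2t-2$ if $a^2\in NC$ and $deg(a)=2t-1$ otherwise. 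Now I split on $a_2$. If $a_2=0$, then $t=p$, and since $a\neq0$ we must have $a=(1,0)$, i.e. $a=p$, with $a^2=a\in NC$; this gives $deg(p)=2p-2$ and settles (1). If $a_2\neq0$, then $a_2$ is a unit of $\mathbb{Z}_p$, so $a_2c=0$ forces $c=0$ and $a_2c=1$ forces $c=a_2^{-1}$, whence $t=2$; here $a^2=(a_1^2,a_2^2)\in NC$ exactly when $a_2^2=1$, i.e. $a_2=\pm1$ (as $p$ is an odd prime), which by the correspondences above means $a\in\{1,p+1,p-1,2p-1\}$. In that case $deg(a)=2\cdot2-2=2$, proving (2); for every remaining value of $a_2$ we get $deg(a)=2\cdot2-1=3$, proving (3). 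The same computation shows every nonzero element of $\mathbb{Z}_{2p}$ has degree at least $2$, hence is a vertex, so the three cases indeed exhaust $V(G_N(\mathbb{Z}_{2p}))$.

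I do not expect a genuine obstacle here; the argument is a finite case check, and the only points needing care are recognizing that $a$ must be removed from the set $\{y:ay\in NC\}$ exactly when $a^2$ is nil clean, and matching the exceptional residues $1,p-1,p,p+1,2p-1$ of $\mathbb{Z}_{2p}$ with the correct points of $\mathbb{Z}_2\times\mathbb{Z}_p$.
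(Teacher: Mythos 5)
Your proof is correct, and it reorganizes the argument in a way that differs from the paper's. The paper works directly in $\mathbb{Z}_{2p}$: it lists the neighbourhoods of $1,p-1,p+1,2p-1$ explicitly for part (2), and for part (3) it splits on the parity of $a$ and counts solutions of the four linear congruences $ax\equiv c\pmod{2p}$, $c\in\{0,1,p,p+1\}$, via the standard $\gcd$ criterion. You instead pass through $\mathbb{Z}_{2p}\cong\mathbb{Z}_2\times\mathbb{Z}_p$, observe that the $\mathbb{Z}_2$-coordinate is irrelevant to the condition $ay\in NC$, and reduce everything to the single count $t=|\{c\in\mathbb{Z}_p: a_2c\in\{0,1\}\}|$, packaged in the uniform formula $deg(a)=2t-1$ or $2t-2$ according as $a^2\notin NC$ or $a^2\in NC$. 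The underlying computation is the same solution-counting, but your version buys two things: all three cases fall out of one formula (the trichotomy is just $a_2=0$, $a_2=\pm1$, or $a_2$ a unit with $a_2^2\neq1$), and you explicitly verify that every nonzero element of $\mathbb{Z}_{2p}$ is a vertex, a fact the paper uses implicitly in part (1) when it concludes $deg(p)=2p-2$ from ``every vertex is adjacent to $p$.'' The paper's version, on the other hand, produces the explicit neighbour sets $A_a$ that are then used to draw Figure 3 and to justify Remark 3.2, which your more streamlined count does not directly provide.
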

\begin{proof}
Clearly $NC(\mathbb{Z}_{2p})=\{0,1,p,p+1\}$.
  \begin{enumerate}
    \item If $a=p$, then for any $y\in V(G_N(\mathbb{Z}_{2p}))$, either $yp=p$ or $yp=0$. Hence every element of $V(G_N(\mathbb{Z}_{2p}))$ is adjacent to $p$.
    \item  It is easy to observe that, $A_1=\{p, p+1\}$, $A_{p-1}=\{p, 2p-1\}$, $A_{p+1}=\{1, p\}$ and $A_{2p-1}=\{p-1, p\}$.

    \item 
        Let $a\in \mathbb{Z}_{2p}\setminus \{0, 1, p-1, p, p+1, 2p-1\}$. \\
        Case (I): Let $a$ be an even number. If $ax=0$ in $\mathbb{Z}_{2p}$, then it has two solutions $0$ and $p$. If $ax=1$ in $\mathbb{Z}_{2p}$, then it has no solution, since $gcd(2p, a)=2\nmid 1$. If $ax=p$ in $\mathbb{Z}_{2p}$, then also it has no solution, since $gcd(2p, a)=2\nmid p$. If $ax=p+1$ in $\mathbb{Z}_{2p}$, then it has two distinct solutions $x_1$ and $x_2$ in $\mathbb{Z}_{2p}$, since $gcd(2p, a)=2\mid p+1$. Hence we conclude that $A_a=\{p,x_1,x_2\}$.\\
        Case (II): Let $a$ be an odd number. If $ax=0$ in $\mathbb{Z}_{2p}$, then it has a unique solution $x=0$. If $ax=1$ in $\mathbb{Z}_{2p}$, then it has unique odd solution $x=y_1$ in $\mathbb{Z}_{2p}$, since $gcd(2p, a)=1\mid 1$. If $ax=p$ in $\mathbb{Z}_{2p}$, then it has unique solution $x=p$, since $gcd(2p, a)=1\mid p$. If $ax=p+1$ in $\mathbb{Z}_{2p}$, then it has unique even solution $x=y_2$ in $\mathbb{Z}_{2p}$, since $gcd(2p, a)=1\mid p+1$. Hence $A_a=\{p,y_1,y_2\}$\\
        From the above cases it follows $deg(a)=3$.
  \end{enumerate}
\end{proof}
\begin{Rem}\label{3.2}
  In the proof of Lemma \ref{3.1} (3), Case(I), since $ax_1=ax_2$ in $\mathbb{Z}_{2p}$, so $x_1-x_2=0$ or $p$, but $x_1-x_2\neq 0$ as $x_1$ and $x_2$ are distinct. Hence if $x_1$ is odd, then $x_2$ is even and if $x_1$ is even, then $x_2$ is odd.
\end{Rem}
From Lemma \ref{3.1} and Remark \ref{3.2}, for any prime $p>2$, the nil clean divisor graph of $\mathbb{Z}_{2p}$ is the following:
\begin{figure}[H]  
\begin{pspicture}(0,3)(0,-1)
\scalebox{.8}{
\rput(5,0){
\psdot[linewidth=.05](-12,1.5)
\psdot[linewidth=.05](-10,3.5)
\psdot[linewidth=.05](-10,-.5)
\psdot[linewidth=.05](-9,3.5)
\psdot[linewidth=.05](-9,-.5)
\psdot[linewidth=.05](-7,3.5)
\psdot[linewidth=.05](-7,-.5)
\psdot[linewidth=.05](-6,3.5)
\psdot[linewidth=.05](-6,-.5)
\psdot[linewidth=.05](-4,3.5)
\psdot[linewidth=.05](-4,-.5)
\psdot[linewidth=.05](-3,3.5)
\psdot[linewidth=.05](-3,-.5)
\psdot[linewidth=.003](-2.2,2.5)
\psdot[linewidth=.003](-2,2.5)
\psdot[linewidth=.003](-1.8,2.5)
\psdot[linewidth=.003](-2.2,.5)
\psdot[linewidth=.003](-2,.5)
\psdot[linewidth=.003](-1.8,.5)
\psdot[linewidth=.05](-1,3.5)
\psdot[linewidth=.05](-1,-.5)
\psdot[linewidth=.05](0,3.5)
\psdot[linewidth=.05](0,-.5)
\psdot[linewidth=.05](1,3)
\psdot[linewidth=.05](1,0)
\psdot[linewidth=.05](2,2.5)
\psdot[linewidth=.05](2,.5)
\psline (-9,-.5)(-12,1.5)(-10,3.5)(-10,-.5)(-9,-.5)(-9,3.5)(-10,3.5)
\psline (-10,-.5)(-12,1.5)(-9,3.5)
\psline (-6,-.5)(-12,1.5)(-7,3.5)(-7,-.5)(-6,-.5)(-6,3.5)(-7,3.5)
\psline (-7,-.5)(-12,1.5)(-6,3.5)
\psline (-3,-.5)(-12,1.5)(-4,3.5)(-4,-.5)(-3,-.5)(-3,3.5)(-4,3.5)
\psline (-4,-.5)(-12,1.5)(-3,3.5)
\psline (-1,-.5)(-12,1.5)(0,3.5)(0,-.5)(-1,-.5)(-1,3.5)(0,3.5)
\psline (0,-.5)(-12,1.5)(-1,3.5)
\psline (-12,1.5)(1,3)(1,0)(-12,1.5)
\psline (-12,1.5)(2,2.5)(2,.5)(-12,1.5)
\rput(-12.3,1.5){$p$}
\rput(1,-.3){$p+1$}
\rput(1,3.3){$1$}
\rput(2,.2){$p-1$}
\rput(2,2.8){$2p-1$}
\rput(-10,3.8){$c_1$}
\rput(-10,-.8){$a_1$}
\rput(-9,3.8){$d_1$}
\rput(-9,-.8){$b_1$}
\rput(-7,3.8){$c_2$}
\rput(-7,-.8){$a_2$}
\rput(-6,3.8){$d_2$}
\rput(-6,-.8){$b_2$}
\rput(-4,3.8){$c_3$}
\rput(-4,-.8){$a_3$}
\rput(-3,3.8){$d_3$}
\rput(-3,-.8){$b_3$}
\rput(-1,3.8){$c_{\frac{p-3}{2}}$}
\rput(-1,-.8){$a_{\frac{p-3}{2}}$}
\rput(0,3.8){$d_{\frac{p-3}{2}}$}
\rput(0,-.8){$b_{\frac{p-3}{2}}$}
}}
\end{pspicture}
\caption{Nil clean divisor graph of $\mathbb{Z}_{2p}$.}\label{3}
\end{figure}
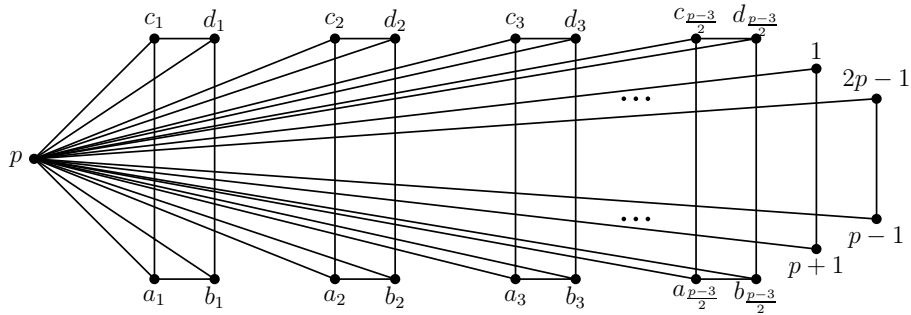
In Figure \ref{3}, $a_i$ and $b_i$ are even numbers from $\mathbb{Z}_{2p}\setminus \{0, 1, p-1, p, p+1, 2p-1\}$ such that $a_ib_i=p+1$, for $1\leq i\leq \frac{p-3}{2}$. Also $c_i=a_i+p$ and $d_i=b_i+p$, for $1\leq i\leq \frac{p-3}{2}$.\\
From the above observations we conclude the following:
\begin{Thm}\label{3.3}
  The following hold for nil clean divisor graph $G_N(\mathbb{Z}_{2p})$, for any odd prime $p$.
  \begin{enumerate}
    \item Clique number of $G_N(\mathbb{Z}_{2p})$ is $3$.
    \item Diameter of $G_N(\mathbb{Z}_{2p})$ is $2$.
    \item Girth of $G_N(\mathbb{Z}_{2p})$ is $3$.
    \item $\{p\}$ is the unique smallest dominating set for $G_N(\mathbb{Z}_{2p})$, that is, dominating number of the graph is $1$.
      \end{enumerate}
\end{Thm}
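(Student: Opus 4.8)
The plan is to extract parts (2)--(4) quickly from one observation and put the real work into part (1). All the input I need is already in Lemma~\ref{3.1} and Remark~\ref{3.2}: $NC(\mathbb{Z}_{2p})=\{0,1,p,p+1\}$; the vertex $p$ is adjacent to \emph{every} other vertex, since $yp\in\{0,p\}\subseteq NC(\mathbb{Z}_{2p})$ for all $y$ (this is the content of the proof of Lemma~\ref{3.1}(1)); and, apart from $p$, the degrees occurring in $G_N(\mathbb{Z}_{2p})$ are only $2$ (at $1,p-1,p+1,2p-1$) and $3$ (everywhere else). Since $|V(G_N(\mathbb{Z}_{2p}))|=2p-1$, this also says $\deg(p)=|V(G_N(\mathbb{Z}_{2p}))|-1$, i.e. $p$ is a universal vertex of $G_N(\mathbb{Z}_{2p})$.

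Given this, part (2) is immediate: the path $x-p-y$ gives $diam(G_N(\mathbb{Z}_{2p}))\le 2$, while $\mathbb{Z}_{2p}\cong\mathbb{Z}_2\times\mathbb{Z}_p$ is not nil clean, so $G_N(\mathbb{Z}_{2p})$ is not complete by Theorem~2.2 (concretely $A_1=\{p,p+1\}$, so $1\not\sim p-1$), whence $diam(G_N(\mathbb{Z}_{2p}))=2$. For part (3): $p(p-1)\equiv 0\pmod{2p}$, so $p$ is a nontrivial idempotent, and Theorem~2.4 gives $gr(G_N(\mathbb{Z}_{2p}))=3$ (or exhibit the triangle $1-p-(p+1)-1$, noting $p(p+1)\equiv 0$, and invoke Theorem~\ref{T2.6}(3)). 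For part (4): $\{p\}$ dominates $G_N(\mathbb{Z}_{2p})$ because $p$ is universal, so the domination number is $1$; conversely a dominating set of size $1$ must be $\{v\}$ with $v$ universal, forcing $\deg(v)=2p-2$, which by Lemma~\ref{3.1} happens only for $v=p$ (every other vertex has degree $2$ or $3$, and $3<2p-2$ since $p\ge 3$). Hence $\{p\}$ is the unique smallest dominating set.

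Part (1) is the substantial one. The bound $\omega(G_N(\mathbb{Z}_{2p}))\ge 3$ follows from the triangle above (or from Theorem~2.10, noting $Idem(\mathbb{Z}_{2p})=NC(\mathbb{Z}_{2p})$ has four elements), so I must rule out a $4$-clique. Every vertex of a $4$-clique has degree $\ge 3$, so such a clique avoids the degree-$2$ vertices $1,p-1,p+1,2p-1$ and contains at most one copy of the only vertex of degree $>3$, namely $p$; hence it contains a vertex $a$ of degree exactly $3$ (so in particular no $4$-clique exists when $p=3$). Then $A_a$ has three elements, always including $p$, and must be exactly the set of the other three clique vertices, so the clique is $\{p,a,u,v\}$ with $\{u,v\}=A_a\setminus\{p\}$; it remains only to check $uv\notin NC(\mathbb{Z}_{2p})$. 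I would finish by the two cases of Lemma~\ref{3.1}(3). If $a$ is even, then $au=av=p+1$, the two solutions differ by $p$ (Remark~\ref{3.2}), and exactly one of $u,v$ is odd; calling that one $w$ one gets $uv\equiv w^2+p\pmod{2p}$, so membership of $uv$ in the even part $\{0,p+1\}$ of $NC(\mathbb{Z}_{2p})$ (recall $p$ is odd) forces $p\mid w$ or $w^2\equiv 1\pmod{2p}$, i.e. $w\in\{0,p,1,2p-1\}$ --- and each of these contradicts $w\neq 0$ together with $aw=p+1$ and $a\notin\{1,p-1,p,p+1,2p-1\}$. If $a$ is odd, then $a$ is a unit, $u=a^{-1}$, $av=p+1$ with $u$ odd and $v$ even, so $uv=a^{-2}(p+1)$; then $uv=0$ is impossible ($p+1\neq 0$) and $uv=p+1$ forces $a^2\equiv 1\pmod{2p}$, i.e. $a\in\{1,2p-1\}$, again excluded. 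Therefore $G_N(\mathbb{Z}_{2p})$ has no $4$-clique and $\omega(G_N(\mathbb{Z}_{2p}))=3$. I expect this last case analysis --- the parity-and-congruence bookkeeping around the relations $au,av\in\{1,p+1\}$ --- to be the only genuine obstacle; parts (2)--(4) fall out essentially for free once $p$ is recognized as a universal vertex.
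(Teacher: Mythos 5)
Your proof is correct and follows the same route as the paper: everything is read off from the degree and adjacency data of Lemma 3.1 and Remark 3.2 (equivalently, Figure 3), with $p$ recognized as a universal vertex. The paper in fact states Theorem 3.3 with no written proof beyond ``from the above observations,'' so your explicit exclusion of a $4$-clique in part (1) --- forcing the clique to be $\{p,a\}\cup(A_a\setminus\{p\})$ for a degree-$3$ vertex $a$ and then checking via the parity/congruence analysis that the two remaining vertices are non-adjacent --- supplies exactly the detail the paper leaves implicit, and it checks out.
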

Next we study about nil clean divisor graph of $\mathbb{Z}_{3p}$. Here we study the graph theoretic properties of $G_N(\mathbb{Z}_{3p})$.
\begin{Lem}\label{3.4}
  In $G_N(\mathbb{Z}_{3p})$; where $p\equiv 2(mod\,3)$, the following hold.
  \begin{enumerate}
    \item $deg(3k)=5$ if $3k\notin \{p+1, 2p-1\}$, for $1\leq k\leq p-1$.
    \item $deg(p+1)=deg(2p-1)=4$.
  \end{enumerate}
\end{Lem}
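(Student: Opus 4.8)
The plan is to follow the template of Lemma~\ref{3.1}: first determine $NC(\mathbb{Z}_{3p})$ precisely, then for a multiple of $3$ count the solutions $y\notin\{0,a\}$ of $ay\in NC(\mathbb{Z}_{3p})$ by splitting according to which nil clean element the product equals. Since $3p$ is squarefree, $Nil(\mathbb{Z}_{3p})=\{0\}$ and hence $NC(\mathbb{Z}_{3p})=Idem(\mathbb{Z}_{3p})$. Solving $e(e-1)\equiv 0\pmod{3p}$ with $\gcd(3,p)=1$ gives the four idempotents, and here the hypothesis $p\equiv 2\pmod 3$ is exactly what makes the two nontrivial ones come out in closed form as $p+1$ (which is $\equiv 0\pmod 3$ and $\equiv 1\pmod p$) and $2p$ (which is $\equiv 1\pmod 3$ and $\equiv 0\pmod p$). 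Thus $NC(\mathbb{Z}_{3p})=\{0,1,p+1,2p\}$.

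Now fix $a=3k$ with $1\le k\le p-1$. The pivotal remark is that $ay=3ky$ is always divisible by $3$ in $\mathbb{Z}_{3p}$, while among $\{0,1,p+1,2p\}$ only $0$ and $p+1$ are divisible by $3$ (note $2p\equiv 1\pmod 3$). Hence $deg(a)$ is the number of $y\in\mathbb{Z}_{3p}\setminus\{0,a\}$ satisfying $3ky\equiv 0$ or $3ky\equiv p+1\pmod{3p}$, and these two congruence classes are disjoint since a common solution would force $0\equiv p+1\pmod{3p}$. For $3ky\equiv 0\pmod{3p}$ the condition is equivalent to $p\mid ky$, hence to $p\mid y$ because $\gcd(k,p)=1$, so $y\in\{0,p,2p\}$; discarding $y=0$ and noting that $p,2p$ are not multiples of $3$ (so neither equals $a$), this contributes exactly the two neighbours $p$ and $2p$. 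For $3ky\equiv p+1\pmod{3p}$, writing $p+1=3m$ it reduces to $ky\equiv m\pmod p$, i.e.\ $y\equiv k^{-1}m\pmod p$, which has the three solutions $y_0,\,y_0+p,\,y_0+2p$ in $\mathbb{Z}_{3p}$; since $3m\equiv 1\pmod p$ we have $m\not\equiv 0\pmod p$, so none of these is $0$.

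The one delicate point — and the step I expect to require the most care — is deciding when one of these three solutions equals $a$ itself, since that is precisely the difference between $deg(a)=5$ and $deg(a)=4$. If $y=a=3k$ and $3ky\equiv p+1\pmod{3p}$, then $(3k)^2\equiv p+1\equiv 1\pmod p$, so $3k\equiv\pm1\pmod p$; together with $3k\equiv 0\pmod 3$ and $0<3k<3p$, the Chinese Remainder Theorem forces $3k=p+1$ or $3k=2p-1$, and conversely for these two values of $a$ the congruence $ay\equiv p+1$ is solved by $y=a$. Hence if $3k\notin\{p+1,2p-1\}$ all three solutions are legitimate neighbours and $deg(3k)=2+3=5$, giving (1); if $3k\in\{p+1,2p-1\}$ one of the three equals $a$ and must be dropped, so $deg(p+1)=deg(2p-1)=2+2=4$, giving (2). (Alternatively one could push everything through the isomorphism $\mathbb{Z}_{3p}\cong\mathbb{Z}_3\times\mathbb{Z}_p$, where a nonzero multiple of $3$ is $(0,b)$ with $b\neq0$ and its neighbours are the elements $(c,d)$, $c\in\mathbb{Z}_3$, with $bd\in\{0,1\}$ — this renders the count $5=2+3$ versus $4$ completely transparent.)
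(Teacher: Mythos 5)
Your proposal is correct and follows essentially the same route as the paper: identify $NC(\mathbb{Z}_{3p})=\{0,1,p+1,2p\}$, observe that a multiple of $3$ can only multiply to $0$ or $p+1$, count the $3+3$ solutions of the two congruences, and then subtract for the excluded vertex $0$ and for the loop that occurs exactly when $(3k)^2\equiv p+1$, i.e.\ when $3k\in\{p+1,2p-1\}$. You merely supply details the paper leaves implicit (the derivation of $NC(\mathbb{Z}_{3p})$ via idempotents and the verification that $p+1$ and $2p-1$ are the only square roots of $p+1$ among multiples of $3$), so no further comparison is needed.
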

\begin{proof}
  Here $NC(\mathbb{Z}_{3p})=\{0,1,p+1,2p\}$. Observe that $3k.x\equiv 1(mod \, 3p)$ and $3k.x\equiv 2p(mod \, 3p)$ has no solution, as $gcd(3k,3p)=3$ does not divide $1$ and $2p$. The congruence $3k.x\equiv 0(mod \, 3p)$ has three incongruent solutions $\{0,p,2p\}$ in $\mathbb{Z}_{3p}$. Also $3k.x\equiv p+1(mod\,3p)$ has three distinct incongruent solutions in $\mathbb{Z}_{3p}$, as $gcd(3k,3p)=3$ divides $p+1$.
  \begin{enumerate}
    \item As $x^2\equiv p+1(mod \, 3p)$, has two solutions $p+1$ and $2p-1$, hence if $3k\notin \{p+1, 2p-1\}$, then $deg(3k)=6-1=5$, as $0\notin V(G_N(\mathbb{Z}_{3p}))$.
    \item If $3k\in \{p+1, 2p-1\}$, then $deg(3k)=6-2$, as $0\notin V(G_N(\mathbb{Z}_{3p}))$ and we do not consider any loop.
  \end{enumerate}
\end{proof}
\begin{Lem}\label{3.5}
  In $G_N(\mathbb{Z}_{3p})$, where $p\equiv 2(mod\, 3)$ the following hold.
  \begin{enumerate}
    \item $deg(p)=deg(2p)=2p-2$.
    \item For $x\in \{1,p-1,3p-1,2p+1\}$, $deg(x)=2$.
    \item For $x\in \mathbb{Z}_{3p}\setminus L$, $deg(x)=3$, where $L=\{3k\,\,:\,\,1\leq k\leq p-1\}\cup \{1,p-1,2p+1, 3p-1,p,2p\}$.
  \end{enumerate}
  \end{Lem}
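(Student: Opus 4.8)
The plan is to compute, for each nonzero $x\in\mathbb{Z}_{3p}$, the size of its neighbourhood
$\{\,y\in\mathbb{Z}_{3p}\ :\ y\neq 0,\ y\neq x,\ xy\in NC(\mathbb{Z}_{3p})\,\}$, where $NC(\mathbb{Z}_{3p})=\{0,1,p+1,2p\}$ as recorded in the proof of Lemma \ref{3.4}. The natural split is by $d=\gcd(x,3p)\in\{1,3,p\}$; the value $3p$ cannot occur since $x\neq 0$. The case $d=3$ is exactly Lemma \ref{3.4}, and those $x$ are precisely the nonzero multiples of $3$ (for if $x=3k$ with $1\le k\le p-1$ then $\gcd(3k,3p)=3\gcd(k,p)=3$), i.e. the set $\{3k:1\le k\le p-1\}\subseteq L$. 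So it remains to treat $d=p$ for part (1), and $d=1$ for parts (2) and (3).

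For $d=p$, that is $x\in\{p,2p\}$, the map $y\mapsto xy$ depends only on $y\bmod 3$ and has image $\{0,p,2p\}$. Since $p\notin NC(\mathbb{Z}_{3p})$ while $0,2p\in NC(\mathbb{Z}_{3p})$, exactly two of the three residue classes mod $3$ yield $xy\in NC(\mathbb{Z}_{3p})$, i.e. $2p$ values of $y$ in all. Using the hypothesis $p\equiv 2\pmod 3$ one checks that both forbidden values $y=0$ and $y=x$ fall among these $2p$ values, so $\deg(x)=2p-2$, which is (1).

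For $d=1$, $x$ is a unit, $y\mapsto xy$ is a bijection, and the four preimages of $0,1,p+1,2p$ are $0,\ x^{-1},\ (p+1)x^{-1},\ 2px^{-1}$; multiplying by $x$ shows these are pairwise distinct (because $0,1,p+1,2p$ are distinct in $\mathbb{Z}_{3p}$), and the last three are nonzero. Discarding $y=0$ leaves three candidate neighbours, so $\deg(x)=3$ unless $x$ coincides with one of them. But $x=(p+1)x^{-1}$ and $x=2px^{-1}$ would force $x^2=p+1$ and $x^2=2p$, both impossible since $x^2$ is a unit whereas neither $p+1$ nor $2p$ is (here $3\mid p+1$ because $p\equiv 2\pmod 3$, and $p\mid 2p$). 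Hence the only possible coincidence is $x=x^{-1}$, i.e. $x^2\equiv 1\pmod{3p}$, and by the Chinese Remainder Theorem — again invoking $p\equiv 2\pmod 3$ to pin down the two mixed residues — this congruence has exactly the four solutions $1,\ p-1,\ 2p+1,\ 3p-1$. For these $x$ the candidate $x^{-1}=x$ is lost, giving $\deg(x)=2$, which is (2); for every other unit $x$ we get $\deg(x)=3$.

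Finally I would assemble (3): the set $L$ is the union of the nonzero multiples of $3$ (the $d=3$ vertices), the pair $\{p,2p\}$ (the $d=p$ vertices), and the four square roots $\{1,p-1,2p+1,3p-1\}$ of $1$; hence $\mathbb{Z}_{3p}\setminus L$ consists, apart from $0$, of exactly the units that are not square roots of $1$, and by the previous paragraph each such $x$ has $\deg(x)=3$. The step I expect to demand the most care is identifying the solution set of $x^2\equiv 1\pmod{3p}$ together with the verification that $x^2=p+1$ and $x^2=2p$ are unsolvable over units; both rest on the congruence $p\equiv 2\pmod 3$ forcing $3\mid p+1$, while the rest of the argument is routine counting modulo $3$.
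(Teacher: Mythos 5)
Your proposal is correct and follows essentially the same route as the paper: for each residue class of $\gcd(x,3p)$ you count the solutions of $xy\equiv c \pmod{3p}$ for $c\in NC(\mathbb{Z}_{3p})=\{0,1,p+1,2p\}$ and then discard $y=0$ and $y=x$. You are in fact somewhat more careful than the paper, explicitly ruling out the coincidences $x^2=p+1$ and $x^2=2p$ for units and identifying $\{1,p-1,2p+1,3p-1\}$ as the square roots of unity via the Chinese Remainder Theorem, points the paper's proof passes over tersely.
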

  \begin{proof}Here $NC(\mathbb{Z}_{3p})=\{0,1,p+1,2p\}$.
  \begin{enumerate}
    \item Clearly $p.x\equiv 1(mod\,3p)$ and $p.x\equiv p+1(mod\,3p)$ have no solution as $gcd(3p,p)$ does not divide $1$ and $p+1$. Also $p.x\equiv 0(mod\,3p)$ has $p$ incongruent solutions $\{3k\,\,:\,\, 0\leq k\leq p-1\}$ and $p.x\equiv 2p(mod\,3p)$ has $p$ incongruent solutions $\{3k+2\,\,:\,\, 0\leq k\leq p-1\}$. Since $0\notin V(G_N(\mathbb{Z}_{3p}))$ and $p$ is of the form $3i+2$, for some $0\leq i\leq p-1$, hence $deg(p)=2p-2$. Now $2p.x\equiv 0(mod\,3p)$ has $p$ incongruent solutions $\{3k\,\,:\,\,0\leq k\leq p-1\}$ and $2p.x\equiv 2p(mod\,3p)$ has $p$ incongruent solutions $\{3k+1\,\,:\,\, 0\leq k\leq p-1\}$. But $2p.x\equiv 1(mod\,3p)$ and $2p.x\equiv p+1(mod\,3p)$ have no solutions. Hence $deg(2p)=2p-2$, since $2p$ is of the form $3i+1$, for some $1\leq i \leq p-1$.
    \item Since $x\equiv a(mod\,3p)$, has only one solution $a$, hence $deg(1)=2$. Also $(3p-1).x\equiv c(mod\,3p)$ has only one solution $(3p-1)a$, hence $deg(3p-1)=2$, as $0\notin V(G_N(\mathbb{Z}_{3p}))$ and $3p-1\in U(\mathbb{Z}_{3p})$. Equation $(p-1).x\equiv 1(mod\,3p)$ and $(2p+1).x\equiv c(mod\,3p)$ have a unique solutions, where $c\in \{0,1,2p,p+1\}$. Since $p-1,2p+1\in U(\mathbb{Z}_{3p})$, so $deg(p-1)=deg(2p+1)=2$.
    \item Let $a\in \mathbb{Z}_{3p}\setminus L$. As $gcd(a,3p)=1$, so $a.x\equiv 0(mod\,3p)$ has a unique solution $x=0$. Also $a.x\equiv c(mod\,3p)$, where $c\in \{1,2p,p+1\}$ has a unique solution. Hence $deg(a)=3$.
  \end{enumerate}
  \end{proof}
  From Lemma \ref{3.4} and Lemma \ref{3.5}, for any prime $p>3$ with $p\equiv 2(mod\, 3)$, the nil clean divisor graph of $\mathbb{Z}_{3p}$ is the following:
  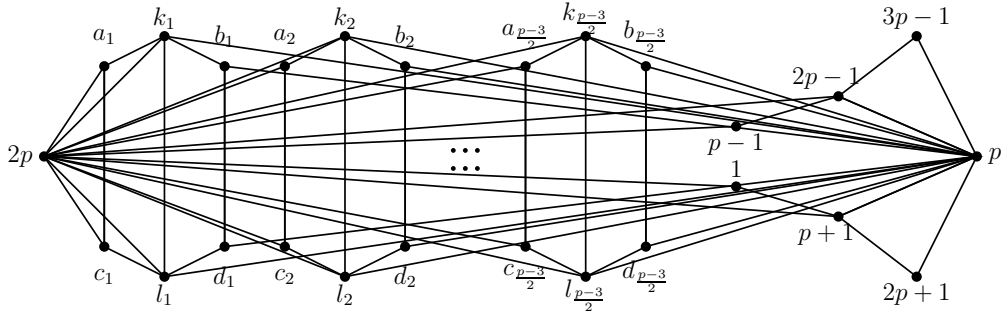
\begin{figure}[H]  
\begin{pspicture}(0,3)(0,-1)
\scalebox{.8}{
\rput(5,0){
\psline (-11, 3)(-11, 0)
\psline (-10, 3.5)(-10, -.5)
\psline (-9, 3)(-9, 0)
\psline (-8, 3)(-8, 0)
\psline (-7, 3.5)(-7, -.5)
\psline (-6, 3)(-6, 0)
\psline (-4, 3)(-4, 0)
\psline (-3, 3.5)(-3, -.5)
\psline (-2, 3)(-2, 0)
\psline (3.5, 1.5)(-10, 3.5)
\psline (3.5, 1.5)(-10, -.5)
\psline (3.5, 1.5)(-7, 3.5)
\psline (3.5, 1.5)(-7, -.5)
\psline (3.5, 1.5)(-3, 3.5)
\psline (3.5, 1.5)(-3, -.5)
\psline (-9, 3)(3.5, 1.5)(-9, 0)
\psline (-6, 3)(3.5, 1.5)(-6, 0)
\psline (-2, 3)(3.5, 1.5)(-2, 0)
\psdot[linewidth=.05](-12,1.5)
\psdot[linewidth=.05](-11, 0)
\psdot[linewidth=.05](-11, 3)
\psdot[linewidth=.05](-10, 3.5)
\psdot[linewidth=.05](-10, -.5)
\psdot[linewidth=.05](-9, 3)
\psdot[linewidth=.05](-9, 0)
\psdot[linewidth=.05](-8, 3)
\psdot[linewidth=.05](-8, 0)
\psdot[linewidth=.05](-7, 3.5)
\psdot[linewidth=.05](-7, -.5)
\psdot[linewidth=.05](-6, 3)
\psdot[linewidth=.05](-6, 0)
\psdot[linewidth=.05](-4, 3)
\psdot[linewidth=.05](-4, 0)
\psdot[linewidth=.05](-3, 3.5)
\psdot[linewidth=.05](-3, -.5)
\psdot[linewidth=.05](-2, 3)
\psdot[linewidth=.05](-2, 0)
\psdot[linewidth=.05](-.5, 2)
\psdot[linewidth=.05](-.5, 1)
\psdot[linewidth=.05](1.2, 2.5)
\psdot[linewidth=.05](1.2, .5)
\psdot[linewidth=.05](2.5, 3.5)
\psdot[linewidth=.05](2.5, -.5)
\psdot[linewidth=.05](3.5, 1.5)
\psdot[linewidth=.005](-5, 1.3)
\psdot[linewidth=.005](-5, 1.6)
\psdot[linewidth=.005](-5.2, 1.3)
\psdot[linewidth=.005](-5.2, 1.6)
\psdot[linewidth=.005](-4.8, 1.3)
\psdot[linewidth=.005](-4.8, 1.6)
\psline (-11,3)(-12,1.5)(-11,0)
\psline (-8,3)(-12,1.5)(-8,0)
\psline (-4,3)(-12,1.5)(-4,0)
\psline (-.5,2)(-12,1.5)(-.5,1)
\psline (1.2,2.5)(-12,1.5)(1.2,.5)
\psline (-10,3.5)(-12,1.5)(-10,-.5)
\psline (-7,3.5)(-12,1.5)(-7,-.5)
\psline (-3,3.5)(-12,1.5)(-3,-.5)
\psline (-11,3)(-10,3.5)(-9,3)(-9,0)(-10,-.5)(-11,0)(-11,3)
\psline (-8,3)(-7,3.5)(-6,3)(-6,0)(-7,-.5)(-8,0)(-8,3)
\psline (-4,3)(-3,3.5)(-2,3)(-2,0)(-3,-.5)(-4,0)(-4,3)
\psline (-.5, 1)(1.2, .5)(3.5, 1.5)(1.2, 2.5)(-.5, 2)
\psline (3.5, 1.5)(1.2, 2.5)(2.5, 3.5)(3.5, 1.5)(1.2, .5)(2.5, -.5)(3.5, 1.5)
\rput(3.8, 1.5){$p$}
\rput(-12.4, 1.5){$2p$}
\rput(-10, 3.8){$k_1$}
\rput(-10, -.8){$l_1$}
\rput(-7, 3.8){$k_2$}
\rput(-7, -.8){$l_2$}
\rput(-3, 3.8){$k_{\frac{p-3}{2}}$}
\rput(-3, -.8){$l_{\frac{p-3}{2}}$}
\rput (-.5, 1.7){$p-1$}
\rput (-.5, 1.3){$1$}
\rput (1, 2.8){$2p-1$}
\rput (1, .2){$p+1$}
\rput (2.5, 3.8){$3p-1$}
\rput (2.5,-.8){$2p+1$}
\rput (-11,3.5){$a_1$}
\rput (-11,-.5){$c_1$}
\rput (-9, 3.5){$b_1$}
\rput (-9, -.5){$d_1$}
\rput (-8, 3.5){$a_2$}
\rput (-8, -.5){$c_2$}
\rput (-6, 3.5){$b_2$}
\rput (-6, -.5){$d_2$}
\rput (-4, 3.5){$a_{\frac{p-3}{2}}$}
\rput (-4, -.5){$c_{\frac{p-3}{2}}$}
\rput (-2, 3.5){$b_{\frac{p-3}{2}}$}
\rput (-2, -.5){$d_{\frac{p-3}{2}}$}
}}
\end{pspicture}
\caption{Nil clean divisor graph of $\mathbb{Z}_{3p}$, where $p\equiv 2(mod \,3)$.}\label{4}
\end{figure}
In Figure \ref{4}, $\{l_i,k_i\}\subseteq \{3k\,\,:\,\,1\leq k\leq p-1\}$, $a_ic_i\equiv 1(mod\,3p)$, $b_id_i\equiv 1(mod\,3p)$ and $a_ik_i \equiv c_il_i\equiv b_ik_i\equiv d_il_i\equiv p+1(mod\, 3p)$, for $1\leq i\leq \frac{p-3}{2}$. Also $a_i\equiv c_i\equiv 1(mod\,3)$ and $b_i\equiv d_i\equiv 2(mod\,3)$, for $1\leq i\leq \frac{p-3}{2}$.
\begin{Thm}\label{3.6}
   For any prime $p$, where $p\equiv 2(mod \,3)$, the following hold for $G_N(\mathbb{Z}_{3p})$.
   \begin{enumerate}
     \item Girth of $G_N(\mathbb{Z}_{3p})$ is $3$.
     \item Clique number of $G_N(\mathbb{Z}_{3p})$ is $3$.
     \item Diameter of $G_N(\mathbb{Z}_{3p})$ is $3$.
     \item $\{p,2p\}$ is the unique smallest dominating set for $G_N(\mathbb{Z}_{3p})$, that is, dominating number of the graph is $2$.
   \end{enumerate}
\end{Thm}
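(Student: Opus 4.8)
The plan is to reduce everything to the Chinese Remainder decomposition $\mathbb{Z}_{3p}\cong\mathbb{Z}_3\times\mathbb{Z}_p$. Since $\mathbb{Z}_{3p}$ is reduced, $NC(\mathbb{Z}_{3p})$ coincides with its set of idempotents, which because $p\equiv 2\pmod 3$ equals $\{(0,0),(1,1),(0,1),(1,0)\}=\{0,1,p+1,2p\}$, in agreement with Lemmas \ref{3.4} and \ref{3.5}. Writing elements as pairs $x=(x_1,x_2)$, this gives the adjacency criterion: distinct nonzero vertices $x,y$ are adjacent iff $\{x_1,y_1\}\neq\{1,2\}$ in $\mathbb{Z}_3$ and $x_2y_2\in\{0,1\}$ in $\mathbb{Z}_p$. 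I would first record two consequences used throughout: the only vertices with second coordinate $0$ are $p=(2,0)$ and $2p=(1,0)$ (and one checks, or reads off Lemmas \ref{3.4}, \ref{3.5}, that every nonzero element is a vertex, so $|V(G_N(\mathbb{Z}_{3p}))|=3p-1$), and $p\not\sim 2p$ because $p\cdot 2p=(2,0)\notin NC(\mathbb{Z}_{3p})$.

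For (1), since $p+1$ is a non-trivial idempotent of $\mathbb{Z}_{3p}$, Theorem 2.4 (equivalently Theorem \ref{T2.6}(3), as $\mathbb{Z}_{3p}$ is not a field) gives $gr(G_N(\mathbb{Z}_{3p}))=3$; explicitly $1-(p+1)-2p-1$ is a triangle. For (2), $\omega\ge 3$ is immediate from Theorem 2.10 since $|Idem(\mathbb{Z}_{3p})|=4$. For $\omega\le 3$, let $S$ be a clique. Projecting to $\mathbb{Z}_3$, no two elements of $S$ can have first coordinates $1$ and $2$, so the first coordinates occurring in $S$ take at most two values. Projecting to $\mathbb{Z}_p$, if three elements of $S$ had nonzero second coordinate, all their pairwise products would equal $1$, forcing those three second coordinates to be equal (to $1$ or to $p-1$); but then those three elements would differ only in first coordinate, impossible since first coordinates take at most two values. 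Hence at most two elements of $S$ have nonzero second coordinate, so at least $|S|-2$ have second coordinate $0$; the only such vertices are $p,2p$, and $p\not\sim 2p$, so $S$ contains at most one of them. Thus $|S|-2\le 1$, i.e. $\omega(G_N(\mathbb{Z}_{3p}))=3$.

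For (3), connectedness and $diam\le 3$ hold by Theorem \ref{T2.6}. To get $diam=3$, choose $\alpha\in\mathbb{Z}_p\setminus\{0,1,p-1\}$ (possible as $p\ge 5$) and let $x$ satisfy $x\equiv 1\pmod 3,\ x\equiv\alpha\pmod p$ and $y$ satisfy $y\equiv 2\pmod 3,\ y\equiv\alpha^{-1}\pmod p$ (these are the vertices $a_i,d_i$ of Figure \ref{4}). Then $xy$ has first coordinate $2\notin\{0,1\}$, so $x\not\sim y$, and a common neighbour $z$ would need $z_1\in\{0,1\}\cap\{0,2\}=\{0\}$ and $z_2\in\{0,\alpha^{-1}\}\cap\{0,\alpha\}=\{0\}$ (using $\alpha\neq\alpha^{-1}$), i.e. $z=0\notin V(G_N(\mathbb{Z}_{3p}))$; hence $d(x,y)\ge 3$. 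For (4), the adjacency criterion gives $N[p]=\{w:w_1\in\{0,2\}\}\setminus\{0\}$ and $N[2p]=\{w:w_1\in\{0,1\}\}\setminus\{0\}$ (in particular $\deg(p)=\deg(2p)=2p-2$, as in Lemma \ref{3.5}(1)), whose union is all of $V(G_N(\mathbb{Z}_{3p}))$, so $\{p,2p\}$ is dominating; since $|V(G_N(\mathbb{Z}_{3p}))|=3p-1$ while every vertex has degree $\le 2p-2<3p-2$, no single vertex dominates, so the domination number is $2$. For uniqueness, if $\{u,v\}$ is a dominating set of size $2$ then $\deg(u)+\deg(v)\ge 3p-3$, and as for $p\ge 5$ the only degree exceeding $5$ is $2p-2$ (attained only at $p,2p$), one of $u,v$, say $u$, lies in $\{p,2p\}$; if $u=p$ then the $p$ vertices with first coordinate $1$ lie outside $N[p]$ and must be dominated by $v$, and since a vertex with first coordinate $2$ dominates none of them while a vertex with nonzero second coordinate dominates at most three, $v$ must equal $(1,0)=2p$. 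The case $u=2p$ is symmetric and gives $v=p$, so $\{p,2p\}$ is the unique smallest dominating set.

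I expect the \emph{main obstacle} to be the upper bound $\omega\le 3$ in part (2): one must bound cliques uniformly across the several vertex-degree types of Lemmas \ref{3.4}--\ref{3.5}, and I believe the coordinatewise adjacency criterion above is the clean route, avoiding a case-by-case reading of Figure \ref{4}. The uniqueness clause in (4) needs an analogous but shorter coordinate argument, and the rest follows quickly from Section 2 together with the explicit common-neighbour computations.
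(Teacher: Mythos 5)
Your proposal is correct, and it takes a genuinely different route from the paper. The paper works entirely inside $\mathbb{Z}_{3p}$: it computes $NC(\mathbb{Z}_{3p})=\{0,1,p+1,2p\}$, counts solutions of the linear congruences $ax\equiv c\ (\mathrm{mod}\ 3p)$ via gcd conditions to obtain the degree classification of Lemmas \ref{3.4} and \ref{3.5}, assembles Figure \ref{4}, and then reads the four assertions off the figure --- the girth from the explicit triangle $p-(p+1)-(2p+1)-p$, the clique bound by assuming a $4$-clique and eliminating cases through the neighbourhoods displayed in the figure, the diameter by asserting that $1$ and $2$ are at distance $3$, and the dominating set by observing that every vertex meets $p$ or $2p$. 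You instead pass to $\mathbb{Z}_3\times\mathbb{Z}_p$, identify $NC$ with the idempotents $\{0,1\}\times\{0,1\}$, and extract a coordinatewise adjacency criterion from which each part follows by a short uniform argument. What your route buys is rigor and economy at exactly the two places where the paper leans hardest on the picture: the upper bound $\omega\le 3$ (your count ``at most two clique vertices with nonzero second coordinate, at most one of $p,2p$'' replaces the paper's case analysis, which as written is hard to audit) and the uniqueness of the dominating set (which the paper essentially asserts, while you give a degree-sum argument plus the observation that only $2p$ can cover the $p$ vertices with first coordinate $1$). What the paper's route buys is the explicit degree sequence and the full structural description of the graph in Figure \ref{4}, which is reused verbatim for the case $p\equiv 1\ (\mathrm{mod}\ 3)$ in Theorem \ref{3.9}; your criterion would handle that case just as uniformly after swapping the roles of $(0,1)$ and $(1,0)$. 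All of your individual verifications (the value of $NC$, $p\not\sim 2p$, $d(1,\cdot)$-type distance-three pairs, the neighbourhood descriptions $N[p]$ and $N[2p]$) check out against the lemmas in the paper.
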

\begin{proof}
  Clearly $NC(\mathbb{Z}_{3p})=\{0,1,p+1,2p\}$.
  \begin{enumerate}
    \item Since $p-(p+1)-(2p+1)-p$ is a cycle of $G_N(\mathbb{Z}_{3p})$, so girth of $G_N(\mathbb{Z}_{3p})$ is $3$.
    \item If possible, let $\omega((G_N(\mathbb{Z}_{3p}))=4$. Then there exists $A=\{a_i\,\,:\,\,1\leq i\leq 4\}\subset V(G_N(\mathbb{Z}_{3p}))$ such that $A$ forms a complete subgraph of $G_N(\mathbb{Z}_{3p})$. If $x\in \mathbb{Z}_{3p}\setminus \{p,2p,3k\,\,:\,\, 1\leq k\leq p-1\}$, then $deg(x)\leq 3$. Also $x$ is adjacent to either $p$ or $2p$, $x^{-1}$ and $3i$, for some $1\leq i\leq p-1$(provided $x\notin \{1,p-1,2p+1, 3p-1\}$). But $x^{-1}$ is also adjacent to $3j$, for some $1\leq j\leq p-1$ such that $i\neq j$. So $A\subseteq \{p,2p,3k\,\,:\,\, 1\leq k\leq p-1\}$. Suppose $a_1=3k$, for some $1\leq k\leq p-1$. From Figure \ref{4}, $A_{a_1}=\{p,2p,3i+1, 3j+2,3s\}$, where $1\leq i,j,s\leq p-1$, also $3s\notin A_{3i+1}$, $3s\notin A_{3j+2}$, $3i+1\notin A_{3j+2}$, $p\notin A_{2p}$, $2p\notin A_{3j+2}$ and $p\notin N_{3i+1}$. Therefore $a_i\notin \{3k\,\,:\,\,1\leq k\leq p-1\}$, a contradiction. Hence $\omega((G_N(\mathbb{Z}_{3p}))=3$, as $\{p, 2p-1,3p-1\}$ forms a complete subgraph of $G_N(\mathbb{Z}_{3p})$.
    \item 
        From Figure \ref{4}; $1$ and $2$ are connected by minimum $3$ edges, so by Theorem \ref{T2.6}, $diam(G_N(\mathbb{Z}_{3p}))=3$.
    \item Since every element of $G_N(\mathbb{Z}_{3p})\setminus \{p,2p\}$ is adjacent to either $p$ or $2p$. Hence proof follows from Figure \ref{4}.
  \end{enumerate}
\end{proof}
\begin{Lem}\label{3.7}
  In $G_N(\mathbb{Z}_{3p})$; where $p\equiv 1(mod\,3)$, the following hold.
  \begin{enumerate}
    \item $deg(3k)=5$ if $3k\notin \{p-1, 2p+1\}$, for $1\leq k\leq p-1$.
    \item $deg(p-1)=deg(2p+1)=4$.
  \end{enumerate}
\end{Lem}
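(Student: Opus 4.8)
The plan is to mirror exactly the structure of Lemma \ref{3.4}, whose proof handled the symmetric case $p\equiv 2\pmod 3$, but now with the roles adjusted to reflect $NC(\mathbb{Z}_{3p})$ in the case $p\equiv 1\pmod 3$. First I would compute $NC(\mathbb{Z}_{3p})$ for $p\equiv 1\pmod 3$: since $\mathbb{Z}_{3p}\cong \mathbb{Z}_3\times\mathbb{Z}_p$, the nil clean elements are those mapping to $\{0,1\}$ in each coordinate, and reading these back modulo $3p$ (using $p\equiv 1\pmod 3$) gives $NC(\mathbb{Z}_{3p})=\{0,1,p,2p+1\}$. The key numerical point is that $2p+1$ and $p-1$ are now the two square roots of one of the nontrivial nil clean elements (one checks $(p-1)^2\equiv p\pmod{3p}$ and $(2p+1)^2\equiv p\pmod{3p}$ when $p\equiv 1\pmod 3$), playing the role that $p+1$ and $2p-1$ played before.

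Next, for a vertex of the form $3k$ with $1\le k\le p-1$, I would run through the four linear congruences $3k\cdot x\equiv c\pmod{3p}$ for $c\in NC(\mathbb{Z}_{3p})=\{0,1,p,2p+1\}$. Since $\gcd(3k,3p)=3$, the congruences with $c=1$ and $c=2p+1$ have no solution because $3\nmid 1$ and $3\nmid 2p+1$ (as $2p+1\equiv 3\cdot\frac{2p+1}{3}$ fails: $2p+1\equiv 0\pmod 3$ would need $p\equiv 1\pmod 3$, so actually $3\mid 2p+1$ here — I must recheck which of the two nontrivial nil clean values is divisible by $3$). The correct bookkeeping: exactly one nontrivial nil clean value, call it $m$, satisfies $3\mid m$, and the congruence $3k\cdot x\equiv m\pmod{3p}$ then has exactly $\gcd(3k,3p)=3$ incongruent solutions; $3k\cdot x\equiv 0$ has the three solutions $\{0,p,2p\}$; and the remaining nontrivial value gives no solution. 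So $3k$ is adjacent to $6$ elements counted with the value $0$ removed, hence to $5$ of them once we drop $0\notin V(G_N(\mathbb{Z}_{3p}))$, \emph{unless} $3k$ is itself one of the three solutions of $3k\cdot x\equiv m$, i.e. unless $3k$ is a square root of $m$; in that case one further element (the loop at $3k$) is discarded, giving degree $4$. Since the square roots of $m$ are exactly $p-1$ and $2p+1$, this yields $deg(3k)=5$ for $3k\notin\{p-1,2p+1\}$ and $deg(p-1)=deg(2p+1)=4$.

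The main obstacle — really the only place where the $p\equiv 1$ case genuinely differs from the $p\equiv 2$ case — is getting the description of $NC(\mathbb{Z}_{3p})$ and the identification of the distinguished divisible-by-$3$ element $m$ correct, together with verifying that $p-1$ and $2p+1$ are its square roots. Once that is pinned down, the solution-counting for $3k\cdot x\equiv c\pmod{3p}$ is the standard fact that $ax\equiv c\pmod n$ has $\gcd(a,n)$ solutions when $\gcd(a,n)\mid c$ and none otherwise, exactly as used in Lemma \ref{3.4}. I would therefore write the proof as: (i) state $NC(\mathbb{Z}_{3p})=\{0,1,p,2p+1\}$ and note $3\mid 2p+1$ and $3\nmid 1$; (ii) observe $3k\cdot x\equiv 1$ and $3k\cdot x\equiv p$ have no solutions while $3k\cdot x\equiv 0$ has solutions $\{0,p,2p\}$ and $3k\cdot x\equiv 2p+1$ has three incongruent solutions; (iii) note $x^2\equiv 2p+1\pmod{3p}$ has the two roots $p-1,2p+1$; (iv) conclude $deg(3k)=6-1=5$ when $3k$ is not such a root and $deg(3k)=6-2=4$ otherwise, in each case subtracting $1$ for $0\notin V(G_N(\mathbb{Z}_{3p}))$ and, in the second case, a further $1$ because loops are not counted. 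This is precisely parallel to the displayed proof of Lemma \ref{3.4}, so no new technique is needed.
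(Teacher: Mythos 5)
Your final four-step plan is correct and is exactly the paper's intended argument --- the paper's own proof of Lemma \ref{3.7} is literally ``similar to the proof of Lemma \ref{3.4}'', and your identification $NC(\mathbb{Z}_{3p})=\{0,1,p,2p+1\}$, the solvability analysis of $3k\cdot x\equiv c\pmod{3p}$ via $\gcd(3k,3p)=3$, and the identification of $p-1$ and $2p+1$ as the roots of $x^2\equiv 2p+1\pmod{3p}$ all check out. Just delete the early parenthetical claim that $(p-1)^2\equiv(2p+1)^2\equiv p\pmod{3p}$: the correct congruence is $(p-1)^2\equiv(2p+1)^2\equiv 2p+1\pmod{3p}$, which you do state correctly in step (iii) of your final write-up.
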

\begin{proof}
  Proof is similar to the proof of Lemma \ref{3.4}.
\end{proof}
\begin{Lem}\label{3.8}
  In $\mathbb{Z}_{3p}$, where $p\equiv 1(mod\,3)$, the following hold.
  \begin{enumerate}
    \item $deg(p)=deg(2p)=2p-2$.
    \item For $x\in \{1,p+1,3p-1,2p-1\}$, $deg(x)=2$.
    \item For $x\in \mathbb{Z}_{3p}\setminus L$, $deg(x)=3$, where $L=\{3k\,\,:\,\,1\leq k\leq p-1\}\cup \{1,p,2p,p+1,2p-1, 3p+1\}$.
  \end{enumerate}
  \end{Lem}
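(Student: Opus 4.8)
The plan is to follow the pattern of Lemma~\ref{3.5} almost verbatim; the one genuinely new ingredient is the description of $NC(\mathbb{Z}_{3p})$ in the case $p\equiv 1\pmod 3$. Since $3p$ is squarefree, $\mathbb{Z}_{3p}$ is reduced, so $NC(\mathbb{Z}_{3p})$ equals the set of idempotents, and $e^{2}=e$ with $e\neq 0,1$ forces $3\mid e,\ p\mid e-1$ or $3\mid e-1,\ p\mid e$ (as $e$ and $e-1$ are coprime); reading off the residues \emph{using $p\equiv 1\pmod 3$} yields
\[
NC(\mathbb{Z}_{3p})=\{0,\,1,\,p,\,2p+1\}.
\]
I would record this at the very start of the proof, together with the identity $p^{2}\equiv p\pmod{3p}$ (immediate from $p^{2}\equiv 1\pmod 3$ and $p^{2}\equiv 0\pmod p$), since every case below uses them.

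For part (1) I would solve $p\cdot x\equiv c\pmod{3p}$ for each $c\in NC(\mathbb{Z}_{3p})$. As $\gcd(p,3p)=p$ divides neither $1$ nor $2p+1$, the values $c=1$ and $c=2p+1$ contribute nothing; $p x\equiv 0$ is equivalent to $x\equiv 0\pmod 3$ and $p x\equiv p$ to $x\equiv 1\pmod 3$, so $x$ ranges over $2p$ residues in all, and discarding $x=0$ and the loop $x=p$ (note $p\equiv 1\pmod 3$) leaves $deg(p)=2p-2$. The computation for $2p$ is identical except that $2p\cdot x\equiv p$ becomes $2x\equiv 1\pmod 3$, i.e.\ $x\equiv 2\pmod 3$, and the loop to discard is $x=2p$ (since $2p\equiv 2\pmod 3$), giving $deg(2p)=2p-2$.

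For parts (2) and (3), note that every $x$ outside $\{3k:1\le k\le p-1\}\cup\{p,2p\}$ is a unit, so multiplication by $x^{-1}$ is a bijection of $\mathbb{Z}_{3p}$ carrying the four distinct nil clean elements $0,1,p,2p+1$ onto the four distinct elements $0,\,x^{-1},\,x^{-1}p,\,x^{-1}(2p+1)$. Discarding the solution $v=0$ of $xv\equiv 0$, the only candidate neighbours of $x$ are therefore the three distinct non-zero elements $x^{-1}$, $x^{-1}p$, $x^{-1}(2p+1)$, so $deg(x)=3$ unless $x$ coincides with one of them. Now $x=x^{-1}p$ would give $x^{2}=p$ and $x=x^{-1}(2p+1)$ would give $x^{2}=2p+1$, both impossible because $p$ and $2p+1$ are zero-divisors while $x^{2}$ is a unit; and $x=x^{-1}$ means $x$ is a square root of $1$ modulo $3p$, i.e.\ $x\in\{1,p+1,2p-1,3p-1\}$ (the four solutions of $x^{2}\equiv 1\pmod{3p}$, read off by CRT using $p\equiv 1\pmod 3$). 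For these four exceptional units the neighbour $x^{-1}=x$ drops out, leaving $deg(x)=2$, and a one-line computation with $p^{2}\equiv p$ pins down the two surviving neighbours: $A_{1}=\{p,2p+1\}$, $A_{p+1}=\{2p,2p+1\}$, $A_{2p-1}=\{p,p-1\}$, $A_{3p-1}=\{2p,p-1\}$, which is (2). For every other unit all three candidates survive, giving $deg(x)=3$ and (3). (Here $L$ is precisely the union of the non-zero multiples of $3$ with $\{p,2p\}$ and the four degree-$2$ vertices $\{1,p+1,2p-1,3p-1\}$, so the ``$3p+1$'' in the statement should read ``$3p-1$''.)

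The only real obstacle is bookkeeping: getting the residue representatives of $NC(\mathbb{Z}_{3p})$ and of the square roots of $1$ correct under the hypothesis $p\equiv 1\pmod 3$ --- this is exactly where the present lemma diverges from Lemma~\ref{3.5} (there $p+1$ plays the role $2p+1$ plays here, and $p-1$ the role of $p$), and everything downstream is the same finite casework as in Lemmas~\ref{3.4} and~\ref{3.5}.
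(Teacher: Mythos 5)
Your proof is correct and is essentially the argument the paper intends: the paper's own ``proof'' is just the remark that it is similar to Lemma~\ref{3.5}, and you have carried out exactly that congruence-counting computation with the one genuinely new input, namely $NC(\mathbb{Z}_{3p})=\{0,1,p,2p+1\}$ when $p\equiv 1\pmod 3$. Your identification of the four square roots of unity $\{1,p+1,2p-1,3p-1\}$ and the resulting observation that the ``$3p+1$'' in the statement of $L$ should read ``$3p-1$'' are both correct.
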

  \begin{proof}
    Proof is similar to the proof Lemma \ref{3.5}.
  \end{proof}
  From Lemma \ref{3.7} and Lemma \ref{3.8}, the nil clean divisor graph of $\mathbb{Z}_{3p}$, where $p\equiv 1(mod\,3)$ is the following:
  \begin{figure}[H]  
\begin{pspicture}(0,3)(0,-1)
\scalebox{.8}{
\rput(5,0){
\psline (-11, 3)(-11, 0)
\psline (-10, 3.5)(-10, -.5)
\psline (-9, 3)(-9, 0)
\psline (-8, 3)(-8, 0)
\psline (-7, 3.5)(-7, -.5)
\psline (-6, 3)(-6, 0)
\psline (-4, 3)(-4, 0)
\psline (-3, 3.5)(-3, -.5)
\psline (-2, 3)(-2, 0)
\psline (3.5, 1.5)(-10, 3.5)
\psline (3.5, 1.5)(-10, -.5)
\psline (3.5, 1.5)(-7, 3.5)
\psline (3.5, 1.5)(-7, -.5)
\psline (3.5, 1.5)(-3, 3.5)
\psline (3.5, 1.5)(-3, -.5)
\psline (-9, 3)(3.5, 1.5)(-9, 0)
\psline (-6, 3)(3.5, 1.5)(-6, 0)
\psline (-2, 3)(3.5, 1.5)(-2, 0)
\psdot[linewidth=.05](-12,1.5)
\psdot[linewidth=.05](-11, 0)
\psdot[linewidth=.05](-11, 3)
\psdot[linewidth=.05](-10, 3.5)
\psdot[linewidth=.05](-10, -.5)
\psdot[linewidth=.05](-9, 3)
\psdot[linewidth=.05](-9, 0)
\psdot[linewidth=.05](-8, 3)
\psdot[linewidth=.05](-8, 0)
\psdot[linewidth=.05](-7, 3.5)
\psdot[linewidth=.05](-7, -.5)
\psdot[linewidth=.05](-6, 3)
\psdot[linewidth=.05](-6, 0)
\psdot[linewidth=.05](-4, 3)
\psdot[linewidth=.05](-4, 0)
\psdot[linewidth=.05](-3, 3.5)
\psdot[linewidth=.05](-3, -.5)
\psdot[linewidth=.05](-2, 3)
\psdot[linewidth=.05](-2, 0)
\psdot[linewidth=.05](-.5, 2)
\psdot[linewidth=.05](-.5, 1)
\psdot[linewidth=.05](1.2, 2.5)
\psdot[linewidth=.05](1.2, .5)
\psdot[linewidth=.05](2.5, 3.5)
\psdot[linewidth=.05](2.5, -.5)
\psdot[linewidth=.05](3.5, 1.5)
\psdot[linewidth=.005](-5, 1.3)
\psdot[linewidth=.005](-5, 1.6)
\psdot[linewidth=.005](-5.2, 1.3)
\psdot[linewidth=.005](-5.2, 1.6)
\psdot[linewidth=.005](-4.8, 1.3)
\psdot[linewidth=.005](-4.8, 1.6)
\psline (-11,3)(-12,1.5)(-11,0)
\psline (-8,3)(-12,1.5)(-8,0)
\psline (-4,3)(-12,1.5)(-4,0)
\psline (-.5,2)(-12,1.5)(-.5,1)
\psline (1.2,2.5)(-12,1.5)(1.2,.5)
\psline (-10,3.5)(-12,1.5)(-10,-.5)
\psline (-7,3.5)(-12,1.5)(-7,-.5)
\psline (-3,3.5)(-12,1.5)(-3,-.5)
\psline (-11,3)(-10,3.5)(-9,3)(-9,0)(-10,-.5)(-11,0)(-11,3)
\psline (-8,3)(-7,3.5)(-6,3)(-6,0)(-7,-.5)(-8,0)(-8,3)
\psline (-4,3)(-3,3.5)(-2,3)(-2,0)(-3,-.5)(-4,0)(-4,3)
\psline (-.5, 1)(1.2, .5)(3.5, 1.5)(1.2, 2.5)(-.5, 2)
\psline (3.5, 1.5)(1.2, 2.5)(2.5, 3.5)(3.5, 1.5)(1.2, .5)(2.5, -.5)(3.5, 1.5)
\rput(3.8, 1.5){$p$}
\rput(-12.4, 1.5){$2p$}
\rput(-10, 3.8){$k_1$}
\rput(-10, -.8){$l_1$}
\rput(-7, 3.8){$k_2$}
\rput(-7, -.8){$l_2$}
\rput(-3, 3.8){$k_{\frac{p-3}{2}}$}
\rput(-3, -.8){$l_{\frac{p-3}{2}}$}
\rput (-.5, 1.7){$p+1$}
\rput (-.5, 1.3){$3p-1$}
\rput (1, 2.8){$2p+1$}
\rput (1, .2){$p-1$}
\rput (2.5, 3.8){$1$}
\rput (2.5,-.8){$2p-1$}
\rput (-11,3.5){$a_1$}
\rput (-11,-.5){$c_1$}
\rput (-9, 3.5){$b_1$}
\rput (-9, -.5){$d_1$}
\rput (-8, 3.5){$a_2$}
\rput (-8, -.5){$c_2$}
\rput (-6, 3.5){$b_2$}
\rput (-6, -.5){$d_2$}
\rput (-4, 3.5){$a_{\frac{p-3}{2}}$}
\rput (-4, -.5){$c_{\frac{p-3}{2}}$}
\rput (-2, 3.5){$b_{\frac{p-3}{2}}$}
\rput (-2, -.5){$d_{\frac{p-3}{2}}$}
}}
\end{pspicture}
\caption{Nil clean divisor graph of $\mathbb{Z}_{3p}$, where $p\equiv 1(mod \,\,3)$.}\label{5}
\end{figure}
In Figure \ref{5}, $\{l_i,k_i\}\subseteq \{3k\,\,:\,\,1\leq k\leq p-1\}$, $a_ic_i\equiv 1(mod\,\,3p)$, $b_id_i\equiv 1(mod\,\,3p)$ and $a_ik_i \equiv c_il_i\equiv b_ik_i\equiv d_il_i\equiv 2p+1(mod\,\, 3p)$, for $1\leq i\leq \frac{p-3}{2}$. Also $a_i\equiv c_i\equiv 2(mod\,3)$ and $b_i\equiv d_i\equiv 1(mod\,3)$, for $1\leq i\leq \frac{p-3}{2}$. Hence we get the following theorem:
\begin{Thm}\label{3.9}
   The following hold for $G_N(\mathbb{Z}_{3p})$, for any prime $p$, where $p\equiv 1(mod \,\,3)$.
   \begin{enumerate}
     \item Girth of $G_N(\mathbb{Z}_{3p})$ is $3$.
     \item Clique number of $G_N(\mathbb{Z}_{3p})$ is $3$.
     \item Diameter of $G_N(\mathbb{Z}_{3p})$ is $3$.
     \item $\{p,2p\}$ is the unique smallest dominating set for $G_N(\mathbb{Z}_{3p})$, that is, dominating number of the graph is $2$.
   \end{enumerate}
\end{Thm}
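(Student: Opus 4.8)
The plan is to re-run the proof of Theorem \ref{3.6} almost verbatim, the point being that $G_N(\mathbb{Z}_{3p})$ for $p\equiv 1\pmod 3$ has exactly the same shape as for $p\equiv 2\pmod 3$ -- compare Figures \ref{4} and \ref{5} -- only with the vertex labels permuted. Concretely I would read structural information off Figure \ref{5} and cite Lemmas \ref{3.7} and \ref{3.8} at the points where the proof of Theorem \ref{3.6} cites Lemmas \ref{3.4} and \ref{3.5}. The arithmetic used throughout is: $NC(\mathbb{Z}_{3p})=\{0,1,p,2p+1\}$ when $p\equiv 1\pmod 3$; the products $p\cdot x$ and $2p\cdot x$ always reduce modulo $3p$ to an element of $\{0,p,2p\}$, whence $p$ is adjacent precisely to those $x$ with $x\equiv 0$ or $1\pmod 3$ and $2p$ precisely to those with $x\equiv 0$ or $2\pmod 3$; and $p\cdot 2p\equiv 2p\notin NC(\mathbb{Z}_{3p})$, so $p\not\sim 2p$.

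For (1) I would exhibit the triangle $p - (2p+1) - 1 - p$ (one checks $p(2p+1)\equiv 0$, $p\cdot 1=p$ and $1\cdot(2p+1)=2p+1$, all in $NC(\mathbb{Z}_{3p})$), which gives $gr(G_N(\mathbb{Z}_{3p}))=3$ and also shows $\omega(G_N(\mathbb{Z}_{3p}))\ge 3$. For the matching upper bound in (2): by Lemma \ref{3.8} a vertex $x\notin\{p,2p\}\cup\{3k:1\le k\le p-1\}$ has $deg(x)\le 3$, with $A_x$ consisting of one of $p,2p$, of $x^{-1}$, and of a single multiple of $3$; since $x^{-1}$ is joined to a multiple of $3$ distinct from that one, $A_x$ is not complete, so any $4$-clique must lie inside $\{p,2p\}\cup\{3k:1\le k\le p-1\}$; but $p\not\sim 2p$, and by Lemma \ref{3.7} each multiple of $3$ is adjacent to exactly one other multiple of $3$, so this set contains no $4$-clique -- hence $\omega(G_N(\mathbb{Z}_{3p}))=3$. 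For (3): Theorem \ref{T2.6}(2) gives $diam\le 3$, while from Figure \ref{5} the degree-two vertices $1$ and $3p-1$ satisfy $A_1=\{p,2p+1\}$, $A_{3p-1}=\{2p,p-1\}$, so $A_1\cap A_{3p-1}=\emptyset$ and $1\not\sim 3p-1$, whereas $1 - p - (p-1) - (3p-1)$ is a path; hence $d(1,3p-1)=3$ and $diam(G_N(\mathbb{Z}_{3p}))=3$. For (4): every $x\notin\{p,2p\}$ lies in residue class $0$, $1$ or $2$ modulo $3$ and so is adjacent to $p$ or to $2p$, giving that $\{p,2p\}$ dominates; no single vertex dominates ($p$ misses the units $\equiv 2\pmod 3$, $2p$ misses the units $\equiv 1\pmod 3$, and each remaining vertex has degree $\le 5<|V(G_N(\mathbb{Z}_{3p}))|-1$ since $p\ge 7$); and the presence of at least two units in each of those residue classes forces any two-element dominating set to equal $\{p,2p\}$, exactly as in Theorem \ref{3.6}(4).

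Because Theorem \ref{3.9} is essentially a relabelled copy of Theorem \ref{3.6}, I expect no genuine obstacle; the step needing the most care is in (2), namely checking that the unique multiple of $3$ adjacent to a degree-three unit $x$ is genuinely distinct from the one adjacent to $x^{-1}$. Viewing $\mathbb{Z}_{3p}\cong\mathbb{Z}_3\times\mathbb{Z}_p$, these two multiples of $3$ coincide iff $x\equiv\pm 1\pmod p$, and for such $x$ one must observe that $x$ then lies in the degree-$\le 2$ set $\{1,p+1,2p-1,3p-1\}$ (or is not a unit), so the bad case never occurs -- this is the one spot where the passage from $p\equiv 2$ to $p\equiv 1$ has to be re-examined rather than merely copied. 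A secondary point is that the uniqueness in (4) should be deduced from how the units distribute among the residue classes modulo $3$, not just quoted from Figure \ref{5}.
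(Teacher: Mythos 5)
Your proposal is correct and follows essentially the same route as the paper, which proves Theorem \ref{3.9} simply by noting that Figure \ref{5} mirrors Figure \ref{4} and transferring the proof of Theorem \ref{3.6}. You in fact supply more detail than the paper does (the identification $NC(\mathbb{Z}_{3p})=\{0,1,p,2p+1\}$, the explicit triangle, the distance-$3$ pair $1$ and $3p-1$, and the check that the two relevant multiples of $3$ coincide only for the degree-$2$ units), all of which is consistent with the paper's Lemmas \ref{3.7} and \ref{3.8}.
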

\begin{proof}
  Since Figure \ref{4} and Figure \ref{5} are similar, hence the proof is similar to the proof of Theorem \ref{3.6}.
\end{proof}
\section{Acknowledgement}
The first Author was supported by Government of India under DST(Department of Science and Technology), DST-INSPIRE registration no IF160671.


\begin{thebibliography}{30}
\bibitem{al} Anderson, D.F. and Livingston, P.S., The zero-divisor graph of a commutative ring. \textit{J. Algebra}, $217(2):434-447$, $1999$.
\bibitem{ncg}Basnet D. K. and Bhattacharyya J., Nil clean graphs of rings, \textit{Algebra Colloq.}, $24(3):481-492$, $2017$.
\bibitem{Beck} Beck, I., Coloring of commutative rings. \textit{J. Algebra} $116(1):208-226$,  $1988$.
\bibitem{diesl} Diesl, A. J., Nil clean rings. \textit{J. Algebra} $383:197-211$,  $2013$.
\bibitem{gt}Diestel R., \textit{Graph Theory}, Springer-Verlag, New York 1997, electronic edition 2000.
\bibitem{gfr}Grimaldi R. P., Graphs from rings, \textit{Proceedings of the 20th Southeastern
Conference on Combinatorics, Graph Theory, and Computing}, Volume 71, pages $95$–$103$, Florida, USA, $1990$. Atlantic University.
\bibitem{kimball}Kimball, C. F. and LaGrange J. D., The idempotent-divisor graphs of a commutative ring. \textit{Comm. Algebra}, $46(9):3899-3912$, $2018$.
\bibitem{TK}Kosan, M. Tamer, and Zhou, Y., On weakly nil-clean rings. \textit{Frontiers of Mathematics in China} $11.4:949-955$, $2016$.
\bibitem{LI} Li, A. and Li, Q., A kind of graph structure on von-Neumann regular rings. \textit{International J. Algebra} $4:291-302$, $2010$.










\end{thebibliography}
\end{document}